\documentclass[letterpaper, 10 pt, conference]{ieeeconf}  
\IEEEoverridecommandlockouts                             
\usepackage{graphicx} 
\usepackage{amsmath}
\usepackage{cite}

\usepackage{amsthm}
\usepackage{mathtools}
\usepackage{changepage}
\usepackage{amssymb}
\usepackage{hyperref}
\usepackage[capitalize]{cleveref}
\usepackage[ruled,vlined]{algorithm2e}
\usepackage{algorithmic}

\usepackage{wrapfig}
\usepackage[font=small]{caption}
\usepackage{subcaption}

\usepackage{mathrsfs}

\makeatletter
\renewcommand\p@subfigure{}   
\makeatother

\newcommand{\KKTgap}{\textup{\texttt{KKT-gap}}}

\newtheorem{assumption}{Assumption}
\newtheorem{lemma}{Lemma}
\newtheorem{theorem}{Theorem}
\newtheorem{remark}{Remark}
\newtheorem{definition}{Definition}
\usepackage{xcolor}
\title{\LARGE \bf
Random-Subspace Sequential Quadratic Programming for Constrained Zeroth-Order Optimization}

\author{Runyu Zhang, Gioele Zardini
\thanks{The authors are with the Laboratory for Information and Decision Systems, Massachusetts Institute of Technology, Cambridge, MA, USA (e-mails: \{runyuzha, gzardini\}@mit.edu).}
\thanks{
Zhang was supported by the MIT Postdoctoral Fellowship
Program for Engineering Excellence; Zardini was supported by DARPA (D25AC00373).
}
}

\begin{document}

\maketitle
\thispagestyle{empty}
\pagestyle{empty}

\begin{abstract}
We study nonlinear constrained optimization problems in which only function evaluations of the objective and constraints are available. Existing zeroth-order methods rely on noisy gradient and Jacobian surrogates in high dimensions, making it difficult to simultaneously achieve computational efficiency and accurate constraint satisfaction. We propose a zeroth-order random-subspace sequential quadratic programming method (ZO-RS-SQP) that combines two-point directional estimation with low-dimensional SQP updates. At each iteration, the method samples a random low-dimensional subspace, estimates the projected objective gradient and constraint Jacobians using two-point evaluations, and solves a reduced quadratic program to compute the step. As a result, the per-iteration evaluation cost scales with the subspace dimension rather than the ambient dimension, while retaining the structured linearized-constraint treatment of SQP. We also consider an Armijo line-search variant that improves robustness in practice. Under standard smoothness and regularity assumptions, we establish convergence to first-order KKT points with high probability. Numerical experiments illustrate the effectiveness of the proposed approach on nonlinear constrained problems.
\end{abstract}
\section{Introduction}
We consider zeroth-order nonlinear constrained optimization problems of the form
\begin{equation*}
\textstyle \min_{x} f(x) \quad \text{s.t.} \quad h(x)=0,\; g(x)\le 0,
\end{equation*}
where the objective and constraints are accessible only through function evaluations.
Problems of this type arise in simulation-based optimization, engineering design, and learning-based systems, where gradients and constraint Jacobians are unavailable or prohibitively expensive to compute \cite{liu2020primer,pasupathy2018sampling}.
In such settings, optimization algorithms must infer local first-order information from function values alone.

While zeroth-order methods are now well developed for unconstrained optimization~\cite{duchi2015optimal,nesterov2017random,berahas2022theoretical,ren2023escaping}, extending them to general nonlinear constrained problems remains significantly more challenging. 
The main difficulty is that one must estimate not only the gradient of the objective, but also the Jacobians of the equality and inequality constraints. 
In high-dimensional problems, constructing these approximations in the ambient space can require a large number of function evaluations. 
As a consequence, methods that scale favorably with dimension often rely on coarse randomized estimators or penalty-based updates, which can make accurate constraint handling more difficult.

Much of the existing literature on constrained zeroth-order optimization therefore adopts Lagrangian-based primal--dual or penalty-style formulations~\cite{zhou_zeroth-order_2025,chen_model-free_2022,yi_linear_2021,nguyen_stochastic_2022,li_zeroth-order_2022,maheshwari22,liu_min-max_2020}. 
These methods are flexible and broadly applicable, but they can be sensitive to parameter tuning, may struggle to maintain feasibility throughout the optimization process, and often provide their strongest guarantees in convex settings. 
By contrast, sequential quadratic programming (SQP) is one of the most effective frameworks for smooth constrained optimization, because it explicitly linearizes the constraints and computes structured search directions by solving quadratic subproblems~\cite{gould2003galahad,liu2018comparison}. 
The challenge is that classical SQP relies on gradient and Jacobian information that is unavailable in the zeroth-order regime.

This tension motivates the following question: can one retain the structured constraint handling of SQP while reducing the cost of derivative estimation so that it scales with a low-dimensional subspace rather than the full ambient dimension? In this paper, we answer this question by proposing a zeroth-order random-subspace sequential quadratic programming method (ZO-RS-SQP).

At each iteration, ZO-RS-SQP samples a low-dimensional random subspace and restricts the search direction to that subspace.
The objective gradient and constraint Jacobians are not estimated in full. 
Instead, we compute only their projections onto the sampled subspace using two-point directional evaluations. 
These projected quantities define a reduced quadratic program, analogous to an SQP subproblem, whose solution is then lifted back to the ambient space. 
In this way, the method combines explicit linearized constraint handling with a per-iteration evaluation cost that scales with the subspace dimension. We also consider an Armijo line-search variant that improves practical robustness by coupling the computed direction with an exact-penalty merit function.

The contributions of this paper are threefold. 
First, we formulate a random-subspace SQP framework that is compatible with zeroth-order information and preserves the basic constrained structure of SQP updates. 
Second, under standard smoothness and regularity assumptions, we show that valid subspaces are accepted with positive probability and establish a high-probability $\mathcal{O}(T^{-1/2})$ bound on an averaged first-order KKT residual for the corresponding exact reduced model.
Third, we illustrate on nonlinear constrained problems that the proposed method is competitive in high-dimensional settings, and that the line-search variant typically yields more stable behavior.

The proposed approach lies at the intersection of several lines of work. 
Subspace methods have long been used to reduce the cost of large-scale optimization, and subspace variants of SQP have also been studied in first-order settings~\cite{wang2006subspace,yuan2007subspace,lee2019subspace,zhao2017subspace}.
However, these methods typically rely on deterministic subspaces and exact derivatives. 
Random-subspace techniques have also been used in unconstrained zeroth-order optimization to reduce the dimension of gradient estimation~\cite{nozawa2025zeroth,kozak2021zeroth,cartis2023scalable}, but they have not been systematically integrated with SQP-style constraint handling for nonlinear constrained problems. 
Our method bridges these directions by combining random subspace exploration, two-point zeroth-order estimation, and reduced SQP updates in a single framework.

Finally, this paper clarifies the connection to our prior work~\cite{zhang2025zeroth}, which approached related constrained zeroth-order problems from a control-theoretic perspective via feedback linearization. 
The present formulation makes the underlying SQP structure explicit, interprets the update as optimization over random low-dimensional subspaces, and thereby places that earlier viewpoint within a broader constrained optimization framework. 
To isolate the effect of subspace restriction in the current analysis, Section~\ref{sec:conv-analysis} studies the exact reduced SQP model induced by the sampled subspace; incorporating the finite-difference approximation error of the fully zeroth-order implementation is left for future work.

\section{Preliminaries}
We consider the nonlinear constrained optimization problem
\begin{equation}
\begin{aligned}
&\textstyle \min_{x \in \mathbb{R}^n} \quad f(x) \\
\text{s.t.} \quad & h(x) = 0, \quad g(x) \leq 0,
\end{aligned}
\label{eq:main_problem}
\end{equation}
where~$f : \mathbb{R}^n \to \mathbb{R}$ is the objective function,~$h : \mathbb{R}^n \to \mathbb{R}^{m_e}$ collects the equality constraints, and~$g : \mathbb{R}^n \to \mathbb{R}^{m_i}$ collects the inequality constraints. 
Throughout, we assume that~$f$,~$h$, and~$g$ are continuously differentiable, and denote by~$\nabla f(x) \in \mathbb{R}^n$,~$J_h(x) \in \mathbb{R}^{m_e \times n}$, and~$J_g(x) \in \mathbb{R}^{m_i \times n}$ the gradient and Jacobians at~$x$, respectively. We denote by $\mathrm{St}(n,d) := \{ U \in \mathbb{R}^{n\times d} : U^\top U = I_d \}$ the Stiefel manifold, and say that a random matrix $U$ is Haar-uniform on $\mathrm{St}(n,d)$ if it is distributed according to the unique probability measure invariant under left multiplication by orthogonal matrices.

For a feasible point $x$, the first-order Karush--Kuhn--Tucker (KKT) conditions assert the existence of multipliers~$\lambda \in \mathbb{R}^{m_e}$ and~$\mu \in \mathbb{R}_+^{m_i}$ such that
\begin{equation}
\begin{aligned}
&\nabla f(x) + J_h(x)^\top \lambda + J_g(x)^\top \mu = 0, \\
&h(x) = 0, \quad 
g(x) \leq 0,\\
&\mu \geq 0, \quad
\mu_i g_i(x) = 0, ~ i = 1, \dots, m_i.
\end{aligned}
\label{eq:kkt_conditions}
\end{equation}

To quantify first-order optimality, we use the KKT gap
\begin{align}
\KKTgap(x;&\lambda,\mu)
\!:=\!
\max \!\Big\{\!
\|\nabla f(x) \!+\! J_h(x)^{\!\top} \!\lambda \!+\! J_g(x)^{\!\top} \!\mu\|_2,\,\notag\\
&\|h(x)\|_\infty,\,
\|[g(x)]_+\|_\infty,\,
\|\mu \odot g(x)\|_\infty\label{eq:kkt_gap}
\Big\},
\end{align}
where~$[g(x)]_+ := [\max\{g_1(x),0\},\dots,\max\{g_{m_i}(x),0\}]^\top$, and $\odot$ denotes the Hadamard product. 
In particular, $\KKTgap(x;\lambda,\mu)=0$ if and only if $(x,\lambda,\mu)$ satisfies the KKT conditions in \eqref{eq:kkt_conditions}. 
When the multipliers are not explicitly available, one may consider the minimum residual over admissible multipliers, but for algorithmic development it is often convenient to work with \eqref{eq:kkt_gap} directly.

\subsection{Sequential Quadratic Programming}

Sequential quadratic programming (SQP) is a standard approach for solving \eqref{eq:main_problem}. At an iterate $x_t$, SQP constructs a local quadratic model of the objective together with linearized constraints, and computes a step $\Delta x_t$ by solving the subproblem
\begin{equation}
\begin{aligned}
\min_{\Delta x \in \mathbb{R}^n} \quad &
\langle \nabla f(x_t), \Delta x \rangle + \frac{L_t}{2}\|\Delta x\|_2^2 \\
\text{s.t.} \quad &
h(x_t) + J_h(x_t)\Delta x = 0, \\
&
g(x_t) + J_g(x_t)\Delta x \leq 0,
\end{aligned}
\label{eq:full_sqp}
\end{equation}
Note that, to better align with the zeroth-order setting, here we adopt a first-order (proximal) SQP variant (cf. \cite{oztoprak_constrained_2021,zhang2025constrainedoptimizationcontrolperspective}) in which the quadratic term $\frac{L_t}{2}\|\Delta x\|_2^2$ replaces the exact or approximated Hessian. This choice is motivated by the fact that first-order information can be more reliably approximated in zeroth-order regimes, making it a natural foundation for building tractable SQP-type methods .
The iterate is then updated as $x_{t+1} = x_t + \eta_t\Delta x_t$, possibly with a line search step to determine the stepsize $\eta_t$.

The main computational bottleneck in \eqref{eq:full_sqp} is that both the search direction and the linearized constraints are formed in the ambient space $\mathbb{R}^n$. This becomes expensive in high dimensions, especially when evaluating or approximating $\nabla f(x_t)$, $J_h(x_t)$, and $J_g(x_t)$ is itself costly.

\subsection{Subspace and Random-Subspace SQP}

A natural way to reduce the per-iteration complexity is to restrict the SQP step to a low-dimensional subspace. Let $U_t \in \mathbb{R}^{n \times d}$ be a matrix with orthonormal columns, where $d \ll n$, and consider search directions of the form
\begin{equation}
\textstyle \Delta x_t = U_t \alpha_t, \qquad \alpha_t \in \mathbb{R}^d.
\label{eq:subspace_param}
\end{equation}
Substituting \eqref{eq:subspace_param} into \eqref{eq:full_sqp} yields the reduced subproblem
\begin{equation}
\begin{aligned}
\min_{\alpha \in \mathbb{R}^d} \quad &
\langle U_t^\top \nabla f(x_t), \alpha \rangle + \frac{L_t}{2}\|\alpha\|_2^2 \\
\text{s.t.} \quad &
h(x_t) + J_h(x_t) U_t \alpha = 0, \\
&
g(x_t) + J_g(x_t) U_t \alpha \leq 0.
\end{aligned}
\label{eq:subspace_sqp}
\end{equation}
Since $U_t^\top U_t = I_d$, the quadratic regularization remains unchanged under this parametrization.

The matrix $U_t$ may be chosen deterministically or randomly. In this work, we focus on \emph{random-subspace SQP}, where $U_t$ is sampled Haar uniformly on $\mathrm{St}(n,d)$ (Algorithm \ref{alg:rssqp} line 4-5), typically as an orthonormal basis of a randomly generated $d$-dimensional subspace. Since the reduced subproblem may be infeasible for a given subspace, we adopt a rejection sampling strategy: if the resulting subproblem is infeasible or fails to admit bounded multipliers, the sampled subspace is discarded and resampled until a valid subspace is obtained (see Algorithm \ref{alg:rssqp} Line 12-14).

The effectiveness of \eqref{eq:subspace_sqp} depends on access to the reduced quantities
\vspace{-10pt}
\begin{equation}
U_t^\top \nabla f(x_t), \quad J_h(x_t) U_t, \quad J_g(x_t) U_t,
\label{eq:reduced_quantities}
\end{equation}
rather than the full gradient and Jacobians. This observation is particularly useful in zeroth-order settings, where direct derivative information is unavailable.

\subsection{Zeroth-Order Optimization and Two-Point Estimation}

In zeroth-order optimization, the algorithm only has access to function evaluations, rather than explicit gradients or Jacobians. A standard approach is to estimate directional derivatives through finite differences. For a unit vector $u \in \mathbb{R}^n$ and smoothing radius $r > 0$, the two-point estimator (cf. \cite{duchi2015optimal,nesterov2017random}) for the directional derivative of $f$ at $x$ along $u$ is
\begin{equation}
\frac{f(x+r u)-f(x-r u)}{2r}.
\label{eq:two_point_scalar}
\end{equation}
Under standard smoothness assumptions, \eqref{eq:two_point_scalar} provides an accurate approximation of $\langle \nabla f(x), u \rangle$.

This idea extends naturally to vector-valued constraint functions. For $h : \mathbb{R}^n \to \mathbb{R}^{m_e}$ and $g : \mathbb{R}^n \to \mathbb{R}^{m_i}$, the two-point estimators
\begin{equation}
\frac{h(x+r u)-h(x-r u)}{2r}
\quad \text{and} \quad
\frac{g(x+r u)-g(x-r u)}{2r}
\label{eq:two_point_vector}
\end{equation}
approximate $J_h(x)u$ and $J_g(x)u$, respectively.

Hence, in the subspace setting, let $U_t = [u_{t,1},\dots,u_{t,d}]$ with orthonormal columns. Applying \eqref{eq:two_point_scalar} and \eqref{eq:two_point_vector} along each subspace direction yields approximations of the reduced quantities in \eqref{eq:reduced_quantities}. More precisely, the $j$-th entry of $U_t^\top \nabla f(x_t)$ can be estimated by
\begin{equation}
\frac{f(x_t+r u_{t,j})-f(x_t-r u_{t,j})}{2r},
\label{eq:two_point_reduced_grad}
\end{equation}
and the $j$-th columns of $J_h(x_t)U_t$ and $J_g(x_t)U_t$ can be estimated by
\begin{equation}
\frac{h(x_t\!+\!r u_{t,j})\!-\!h(x_t\!-\!r u_{t,j})}{2r},~~
\frac{g(x_t\!+\!r u_{t,j})\!-\!g(x_t\!-\!r u_{t,j})}{2r}.
\label{eq:two_point_reduced_jac}
\end{equation}
Thus, the reduced SQP model can be constructed using only zeroth-order information, with a cost that scales with the subspace dimension $d$ rather than the ambient dimension $n$.

In this paper, we focus on the two-point estimator described above. It provides a simple and effective mechanism for approximating the projected gradient and Jacobian information required by the random-subspace SQP subproblem.

\section{Zeroth-Order Random-Subspace SQP (ZO-RS-SQP)}

We now describe the proposed method, which combines the subspace SQP formulation in \eqref{eq:subspace_sqp} with the two-point estimators introduced in \eqref{eq:two_point_reduced_grad}--\eqref{eq:two_point_reduced_jac}. The key idea is to construct the reduced SQP model directly from zeroth-order information, without forming full gradients or Jacobians.

At iteration $t$, let $U_t \in \mathbb{R}^{n \times d}$ be an orthonormal basis of the sampled subspace. Using the estimators defined in \eqref{eq:two_point_reduced_grad}--\eqref{eq:two_point_reduced_jac}, we form approximations
\begin{align*}
\widehat{c}_t \approx U_t^\top \nabla f(x_t), \quad
\widehat{A}_t \approx J_h(x_t) U_t, \quad
\widehat{B}_t \approx J_g(x_t) U_t.
\end{align*}
These quantities are then used to construct the reduced SQP subproblem
\begin{equation}
\begin{aligned}
\textstyle& \min_{\alpha \in \mathbb{R}^d} \quad 
\langle \widehat{c}_t, \alpha \rangle + \frac{L_t}{2}\|\alpha\|_2^2 \\
\text{s.t.} \quad& 
h(x_t) + \widehat{A}_t \alpha = 0, \quad 
g(x_t) + \widehat{B}_t \alpha \leq 0,
\end{aligned}
\label{eq:zo_sqp}
\end{equation}
and the search direction is given by
$\Delta x_t = U_t \alpha_t,$
where $\alpha_t$ is a solution of \eqref{eq:zo_sqp}. Compared to the full SQP step, the model in \eqref{eq:zo_sqp} depends only on reduced quantities and can be constructed using $2d$ function evaluations.

The subspace $U_t$ is generated by sampling a Gaussian matrix $G_t \in \mathbb{R}^{n \times d}$ with i.i.d.\ standard normal entries and computing its thin QR factorization $G_t = U_t R_t.$ The full algorithm is summarized as in Alg. \ref{alg:rssqp}.

\begin{algorithm}[htbp]
\LinesNumbered
\caption{ZO-RS-SQP}
\label{alg:rssqp}
\KwIn{Initial point $x_0 \in \mathbb{R}^n$, subspace dimension $d$, smoothing radius $r>0$, parameters $\{L_t\}$, $\{\eta_t\}$, thresholds $\Lambda, M$}
\For{$t=0,1,\dots,T-1$}{
    accepted $\leftarrow$ \texttt{false}\;
    \While{accepted = \texttt{false}}{
        Sample $G_t \in \mathbb{R}^{n\times d}$ with i.i.d.\ $\mathcal{N}(0,1)$ entries\;
        Compute the thin QR factorization $G_t = U_t R_t$\;
        \For{$j=1,\dots,d$}{
            Let $u_{t,j}$ be the $j$th column of $U_t$\;
            Evaluate $f(x_t \pm r u_{t,j})$, $h(x_t \pm r u_{t,j})$, and $g(x_t \pm r u_{t,j})$\;
            Set
            \begin{align*}
                \widehat{c}_{t,j}
                &= \frac{f(x_t + r u_{t,j}) - f(x_t - r u_{t,j})}{2r},\\
                \widehat{A}_{t,:,j}
                &= \frac{h(x_t + r u_{t,j}) - h(x_t - r u_{t,j})}{2r},\\
                \widehat{B}_{t,:,j}
                &= \frac{g(x_t + r u_{t,j}) - g(x_t - r u_{t,j})}{2r}.
            \end{align*}
        }
        Evaluate $h_t = h(x_t)$ and $g_t = g(x_t)$\;
        Attempt to solve
        \begin{equation}\label{eq:rssqp-with-dual}
        \begin{aligned}
        &\min_{\alpha \in \mathbb{R}^d}
        && \langle \widehat{c}_t, \alpha \rangle + \frac{L_t}{2}\|\alpha\|_2^2 \\
        &\text{s.t.}
        && h_t + \widehat{A}_t \alpha = 0,
        \quad \textup{dual variable } \lambda_t, \\
        &&
        & g_t + \widehat{B}_t \alpha \le 0,
        \quad \textup{dual variable } \mu_t .
        \end{aligned}
        \end{equation}
        \uIf{\eqref{eq:rssqp-with-dual} is feasible and returns $(\alpha_t,\lambda_t,\mu_t)$ with
        $\|\lambda_t\|_\infty \le \Lambda$ and $\|\mu_t\|_\infty \le M$}{
            accepted $\leftarrow$ \texttt{true}\;
        }
        \Else{
            reject the sampled subspace $U_t$ and resample from Line 4\;
        }
    }
    Set $\Delta x_t = U_t \alpha_t$\;
    Update $x_{t+1} = x_t + \eta_t \Delta x_t$\;
}
\KwOut{$x_T$}
\end{algorithm}

\subsection{Merit Function and Armijo Line Search}

To ensure stable progress toward both feasibility and optimality, we employ a merit-function-based line search under suitable smoothness assumptions (introduces in the next section). Specifically, we define the merit function
\begin{equation}
\Phi(x)
:=
f(x) + \tau \|h(x)\|_1 + \tau \|[g(x)]_+\|_1,
\label{eq:merit_function}
\end{equation}
where $\tau > 0$ is a penalty parameter. For appropriately chosen $\tau$ and stepsizes, one can show that the update direction produced by Algorithm~\ref{alg:rssqp} yields a sufficient decrease in $\Phi(x)$; this property is formalized in Theorem \ref{thm:merit_decrease} in the next section.

The existence of such a merit function motivates the use of a line-search procedure to ensure descent along the computed direction. Given a search direction $\Delta x_t$, the step size is selected via Armijo backtracking applied to $\Phi$. Let $\sigma \in (0,1)$ and $\beta \in (0,1)$ be fixed constants. Starting from an initial step size $\eta = 1$, the algorithm reduces $\eta$ geometrically until the sufficient decrease condition
\begin{equation*}
\textstyle \Phi(x_t + \eta \Delta x_t)
\le
\Phi(x_t) + \sigma \eta D_t
\label{eq:armijo_condition}
\end{equation*}
is satisfied, where $D_t$ is a directional derivative surrogate of $\Phi$ along $\Delta x_t$.

In our implementation, $D_t$ is computed using the reduced model. Let $\widehat{c}_t$, $\widehat{A}_t$, and $\widehat{B}_t$ be defined as in \eqref{eq:zo_sqp}. Writing $\Delta x_t = U_t \alpha_t$, we define
\begin{equation}
\begin{split}
\textstyle D_t
:=
\widehat{c}_t^\top \alpha_t
+ \tau \sum_{i=1}^{m_e} \operatorname{sign}(h_i(x_t)) \big(\widehat{A}_t \alpha_t\big)_i
\\\textstyle + \tau \sum_{i=1}^{m_i} \mathbf{1}\{g_i(x_t) > 0\} \big(\widehat{B}_t \alpha_t\big)_i,
\end{split}
\label{eq:directional_derivative}
\end{equation}
which approximates the directional derivative of $\Phi$ along $\Delta x_t$. The resulting line search procedure is summarized in Algorithm~\ref{alg:armijo}. 

\begin{algorithm}[htbp]
\caption{Armijo Line Search}
\label{alg:armijo}
\KwIn{Current iterate $x_t$, direction $\Delta x_t$, merit function $\Phi$, parameters $\sigma \in (0,1)$, $\beta \in (0,1)$}
Compute $D_t$ using \eqref{eq:directional_derivative}\;
Set $\eta \gets 1$\;
\If{$D_t \ge 0$}{
    \Return $\eta = 0$\;
}
\While{$\Phi(x_t + \eta \Delta x_t) > \Phi(x_t) + \sigma \eta D_t$}{
    $\eta \gets \beta \eta$\;
}
\Return $\eta$\;
\end{algorithm}

\section{Convergence Analysis}
\label{sec:conv-analysis}

In this section, we establish convergence properties of the proposed method. Under standard smoothness assumptions, two-point zeroth-order estimators can approximate directional derivatives arbitrarily well given sufficiently accurate function evaluations (cf.~Eq.~(24) in \cite{duchi2015optimal}). Hence, to facilitate the analysis while retaining the essential structure of the method, we consider the exact subproblem~\eqref{eq:subspace_sqp} and defer a treatment of approximation error to future work. We begin by stating the standing assumptions.
\begin{assumption}[Smoothness]
\label{ass:smoothness}
The functions $f$, $h_i$, and $g_i$ are continuously differentiable on $\mathbb{R}^n$. Moreover, their gradients are Lipschitz continuous, with constants $\ell_f$, $\ell_{h,i}$, and $\ell_{g,i}$, respectively, on an open set containing the level set
\begin{equation}
\mathcal{L} := \{ x \in \mathbb{R}^n : \Phi(x) \le \Phi(x_0) \},
\end{equation}
where $\Phi$ is defined in \eqref{eq:merit_function}. We also assume that $\mathcal{L}$ is compact.
\end{assumption}

\begin{assumption}[Bounded constraint gradients]
\label{ass:bounded_grad}
There exist constants $H_f, H_h, H_g > 0$ such that for all $x \in \mathcal{L}$,
\begin{align*}
\|\nabla f(x)\|_2 &\le H_f,\\
\|\nabla h_i(x)\|_2 &\le H_h,
\qquad i = 1,\dots,m_e,\\
\|\nabla g_i(x)\|_2 &\le H_g,
\qquad i = 1,\dots,m_i.
\end{align*}
\end{assumption}

\begin{definition}[\small Mangasarian--Fromovitz constraint qualification]
\label{def:mfcq}
Let $A \in \mathbb{R}^{m_e \times n}$ and $B \in \mathbb{R}^{m_i \times n}$.
Given constants $\sigma>0$, $\gamma>0$, and $R>0$, we say that the pair
$(A,B)$ satisfies $\mathrm{MFCQ}(\sigma,\gamma,R)$ if
\begin{enumerate}
    \item $\sigma_{\min}(A)\ge \sigma;$
    \item there exists a direction $d\in\mathbb R^n$ such that
    \[
    \|d\|_2\le R,
    \qquad
    Ad=0,
    \qquad
    Bd\le -\gamma \mathbf 1.
    \]
\end{enumerate}
\end{definition}

\begin{assumption}[Uniform MFCQ]
\label{ass:orig_regularity}
There exist constants $\sigma_*>0$, $\gamma>0$, and $R_*>0$ such that, for every
$x\in\mathcal L$, the pair $(J_h(x),J_g(x))$ satisfies
$\mathrm{MFCQ}(\sigma_*,\gamma,R_*)$.
\end{assumption}

\subsection{Acceptance Rate}

\begin{theorem}[Positive acceptance probability]
\label{thm:pacc_full_levelset}
Suppose Assumptions~\ref{ass:smoothness}--\ref{ass:orig_regularity} hold, and let $d \ge m_e+1$. At iteration $t$, let $U_t$ be a i.i.d. sample generated from Algorithm \ref{alg:rssqp}, and define the event
\begin{align*}
\mathcal A_t
\!:=\!
\Big\{\!
\text{Eq \eqref{eq:subspace_sqp} is feasible  with }
\|\lambda_t\|_\infty \!\!\le \!\Lambda,
\!\|\mu_t\|_\infty \!\!\le\! M
\!\Big\}\!.
\end{align*}
Then there exists constant $\Lambda^*, M^*$ and $p_{\mathrm{acc}} >0$ (which only depends on the parameters in Assumption \ref{ass:smoothness}--\ref{ass:orig_regularity}) such that for any $\Lambda, M
    \ge
    \Lambda^*, M^*$ we have $\mathbb P\!\left(
\mathcal A_t 
\right) \ge p_\mathrm{acc}$.
\end{theorem}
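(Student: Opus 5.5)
The plan is to show that with positive probability the sampled subspace inherits a quantitative version of MFCQ from Assumption~\ref{ass:orig_regularity}. Then feasibility of \eqref{eq:subspace_sqp} and a multiplier bound follow deterministically from that inherited MFCQ. Fix $x_t\in\mathcal L$ and write $A=J_h(x_t)$, $B=J_g(x_t)$, with MFCQ direction $d_*$ satisfying $\|d_*\|\le R_*$, $Ad_*=0$, $Bd_*\le-\gamma\mathbf 1$. The random objects are $AU$, $BU$ and $U^\top\nabla f(x_t)$, where $U$ is Haar on $\mathrm{St}(n,d)$.

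\textbf{Step 1: a good event of positive probability.} I would define $\mathcal G$ as the event that two things hold. First, $\sigma_{\min}(AU)\ge c_1\sigma_*$. Second, there is $\bar\alpha\in\mathbb R^d$ with $\|\bar\alpha\|\le R'$, $AU\bar\alpha=0$ and $BU\bar\alpha\le-\tfrac{\gamma}{2}\mathbf 1$. Here $c_1,R'$ depend only on $n,d,m_e$ and the constants in Assumptions~\ref{ass:smoothness}--\ref{ass:orig_regularity}.

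For the first condition, write $A=P\Sigma Q^\top$. Then $AU$ has $\sigma_{\min}\ge\sigma_*\,\sigma_{\min}(Q^\top U)$, and $Q^\top U$ is an $m_e\times d$ block of a Haar orthogonal matrix. Its law does not depend on $x_t$, and since $d\ge m_e$ it has full rank almost surely. Hence $\mathbb P(\sigma_{\min}(Q^\top U)\ge c_1)\ge q_1>0$ uniformly.

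For the second condition, I would take the candidate $\bar\alpha=U^\top d_*$ plus a correction. Set $\hat d=UU^\top d_*$, and project onto $\ker(AU)$ by subtracting $U(AU)^\dagger A\hat d$. The slack $\gamma$ survives provided $\|d_*-UU^\top d_*\|$ is small relative to $\gamma/(H_g\sqrt{\cdot})$. This is where $d\ge m_e+1$ matters: $\ker(AU)$ is nontrivial, so a direction exists.

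A cleaner route is rotation invariance. The event that the subspace $\mathrm{span}(U)$ lies within angle $\epsilon$ of a fixed $d$-dimensional subspace $S$ has positive Haar measure $q(\epsilon,n,d)$. I would choose $S\supset\{d_*\}\cup\mathrm{row}(A)$-compatible directions, for instance $S$ containing $d_*$ and $m_e$ directions realizing $\sigma_{\min}$. This needs $d\ge m_e+1$. On this event both conditions hold with constants degraded by $O(\epsilon)$, using $\|A\|,\|B\|$ bounded by Assumption~\ref{ass:bounded_grad}. Choosing $\epsilon$ small in terms of $\sigma_*,\gamma,R_*,H_h,H_g,m_e,m_i$ gives $p_{\mathrm{acc}}:=q(\epsilon,n,d)>0$ independent of $x_t$.

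\textbf{Step 2: feasibility on $\mathcal G$.} Since $AU$ has full row rank, $\alpha_0=-(AU)^\dagger h(x_t)$ solves the equality constraints with $\|\alpha_0\|\le\|h(x_t)\|/(c_1\sigma_*)$. The quantity $\|h\|$ is bounded on the compact set $\mathcal L$. Then $\alpha_0+s\bar\alpha$ satisfies the inequality constraints for $s$ large, because $g(x_t)$ and $BU\alpha_0$ are bounded. So the strongly convex QP is feasible, has a unique solution $\alpha_t$, and $\|\alpha_t\|$ is bounded by the value comparison with this feasible point.

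\textbf{Step 3: multiplier bound (main obstacle).} KKT for the reduced QP gives
\[
U^\top\nabla f+L_t\alpha_t+(AU)^\top\lambda+(BU)^\top\mu=0 .
\]
Taking the inner product with $\bar\alpha$ and using $AU\bar\alpha=0$ and $BU\bar\alpha\le-\tfrac\gamma2\mathbf1$ with $\mu\ge0$ yields
\[
\tfrac\gamma2\|\mu\|_1\le\|\bar\alpha\|\bigl(H_f+L_t\|\alpha_t\|\bigr).
\]
Then $\lambda=-((AU)^\top)^\dagger(\cdots)$ is bounded by $(c_1\sigma_*)^{-1}$ times the remaining terms. This defines $\Lambda^*,M^*$.

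The delicate points are twofold. First, $L_t$ must be bounded, which I would assume from the parameter choice, e.g.\ $L_t\le\bar L$. Second, the Step 1 perturbation must be made uniform over $x\in\mathcal L$. The hardest part is the quantitative lower bound on the Haar probability of $\mathcal G$. I would first try the "near a fixed good subspace" argument, since it only needs positivity rather than a sharp rate.
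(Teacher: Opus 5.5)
Your proposal is correct and follows essentially the paper's route: the paper likewise lower-bounds $\mathbb P(\mathcal A_t)$ by the Haar probability that $\operatorname{span}(U_t)$ lies within a small principal angle of $\mathcal S(x)=\operatorname{span}(d_*(x),\operatorname{range}(J_h(x)^\top))$. It shows the reduced pair inherits MFCQ on that event and concludes bounded multipliers. Your feasibility construction from the homogeneous MFCQ direction (where the paper silently treats $d_*$ as linearized-feasible) and your pairing of reduced stationarity with $\bar\alpha$ make explicit what the paper only cites, and you correctly point out that $\Lambda^*,M^*$ must also depend on an upper bound for $L_t$, which the statement omits.
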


\begin{proof}
Fix $x \in \mathcal L$. By Lemma~\ref{lem:robust_reduced_regularity}, if the sampled subspace lies within principal-angle distance at most $\theta_*$ of $\mathcal S(x)$, then the reduced subproblem is feasible and admits multipliers bounded by $\Lambda_*$ and $M_*$. Therefore,
\begin{align*}
\textstyle \mathbb P\!\left(
\mathcal A_t
\right)
\ge
\mathbb P\!\left(
\angle_{\max}\big(\operatorname{span}(\widetilde U_t),\mathcal S(x_t)\big) \le \theta_*
\;\middle|\;
\mathscr F_t
\right)
\end{align*}
whenever $\Lambda \ge \Lambda_*$ and $M \ge M_*$. Since $ U_t$ is Haar-uniform and $\mathcal L$ is compact, the right-hand side admits a uniform positive lower bound $p_\mathrm{acc} > 0$. 
\end{proof}

\subsection{Descent of the Merit Function}

We analyze the one-sided directional derivative of the merit function \eqref{eq:merit_function}. In particular, we consider the merit function with $\tau \ge \Lambda, M$.

Note that in Algorithm \ref{alg:rssqp} and Eq. \eqref{eq:subspace_sqp} the KKT conditions hold for $\alpha_t$ and multipliers $(\lambda_t,\mu_t)$: 
\begin{equation}
\begin{split}
U_t^{\!\top} \!\nabla\! f(x_t) \!+\! L_t \alpha_t \!+\! U_t^{\!\top}\! J_h(x_t)^{\!\top} \!\lambda_t \!+\!  U_t^{\!\top} \!J_g(x_t)^{\!\top} \! \mu_t &= 0, \\
h(x_t) + J_h(x_t)U_t \alpha_t &= 0, \\
g(x_t) + J_g(x_t)U_t \alpha_t &\le 0, \\
\mu_t \ge 0,\quad  
(\mu_t)\odot \big( g(x_t) + J_g(x_t)U_t \alpha_t \big) &= 0.
\end{split}
\label{eq:reduced-KKT}
\end{equation}

\begin{theorem}[Decrease of Merit Function per-step]
\label{thm:merit_decrease} Under Assumption \ref{ass:smoothness}, and
  define
\begin{align}
\textstyle C_\Phi
:=
\frac{\ell_f}{2}
+
\frac{\tau}{2}\sum_{i=1}^{m_e} \ell_{h,i}
+
\frac{\tau}{2}\sum_{i=1}^{m_i} \ell_{g,i}.\label{eq:def-CPhi}
\end{align}
Then by running Algorithm \ref{alg:rssqp} we get 
\begin{equation}
\begin{split}
\Phi(x_{t+1}) - \Phi(x_t)
&\le
-\big(L_t - C_\Phi\big)\|\Delta x_t\|_2^2
\label{eq:merit_decrease_main}
\end{split}
\end{equation}
\end{theorem}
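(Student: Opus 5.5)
The plan is to prove \eqref{eq:merit_decrease_main} for the unit step $\eta_t=1$, so that $x_{t+1}=x_t+\Delta x_t$ with $\Delta x_t=U_t\alpha_t$. I will bound each of the three pieces of $\Phi$ with the quadratic upper bound (descent lemma) from Assumption~\ref{ass:smoothness}. I will then use the reduced KKT system \eqref{eq:reduced-KKT} to cancel the first-order terms against the penalty terms, using $\tau\ge\Lambda\ge\|\lambda_t\|_\infty$ and $\tau\ge M\ge\|\mu_t\|_\infty$, which hold for every accepted subspace. Throughout I use $\|\Delta x_t\|_2=\|\alpha_t\|_2$, which follows from $U_t^\top U_t=I_d$.

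\textbf{Smooth estimates.} Write $x=x_t$ and $\Delta=\Delta x_t$. The descent lemma gives
\[
f(x+\Delta)\le f(x)+\nabla f(x)^\top\Delta+\tfrac{\ell_f}{2}\|\Delta\|_2^2 .
\]
For each equality constraint, the same bound applied to $\pm h_i$, combined with the linearized constraint $h_i(x)+\nabla h_i(x)^\top\Delta=0$ from \eqref{eq:reduced-KKT}, gives $|h_i(x+\Delta)|\le \tfrac{\ell_{h,i}}{2}\|\Delta\|_2^2$. For each inequality constraint, monotonicity and subadditivity of $[\cdot]_+$ together with $g_i(x)+\nabla g_i(x)^\top\Delta\le 0$ give $[g_i(x+\Delta)]_+\le \tfrac{\ell_{g,i}}{2}\|\Delta\|_2^2$. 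Summing these bounds yields
\[
\Phi(x+\Delta)-\Phi(x)\le \nabla f(x)^\top\Delta - \tau\|h(x)\|_1-\tau\|[g(x)]_+\|_1 + C_\Phi\|\Delta\|_2^2 .
\]

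\textbf{Using the reduced KKT system.} Multiply the stationarity equation in \eqref{eq:reduced-KKT} by $\alpha_t^\top$. This gives
\[
\nabla f(x)^\top\Delta=-L_t\|\Delta\|_2^2-\lambda_t^\top J_h(x)U_t\alpha_t-\mu_t^\top J_g(x)U_t\alpha_t .
\]
The linearized equality constraint gives $J_h U_t\alpha_t=-h(x)$, and complementarity gives $\mu_t^\top J_gU_t\alpha_t=-\mu_t^\top g(x)$. Substituting,
\[
\nabla f(x)^\top\Delta=-L_t\|\Delta\|_2^2+\lambda_t^\top h(x)+\mu_t^\top g(x).
\]
Since $\mu_t\ge 0$, we have $\mu_t^\top g(x)\le \|\mu_t\|_\infty\|[g(x)]_+\|_1$, and by H\"older $\lambda_t^\top h(x)\le\|\lambda_t\|_\infty\|h(x)\|_1$. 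With $\tau\ge\Lambda,M$, these two terms are absorbed by $-\tau\|h\|_1-\tau\|[g]_+\|_1$, which leaves exactly $-(L_t-C_\Phi)\|\Delta\|_2^2$.

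\textbf{Main obstacle.} The delicate point is justifying the Lipschitz bounds along the whole segment $[x_t,x_{t+1}]$, since the Lipschitz constants are only assumed on an open neighbourhood of $\mathcal L$. I would argue by induction that $x_t\in\mathcal L$. Assuming $L_t>C_\Phi$, let $s^*$ be the first exit parameter of the segment from that neighbourhood. Rerunning the estimate above along $x+s\Delta$ for $s\in[0,s^*]$, using convexity of the linearized constraints in $s$, gives $\Phi(x+s\Delta)\le\Phi(x_t)$, which forces $x+s\Delta\in\mathcal L$ and contradicts exiting. The same convexity argument also handles a general $\eta_t\in(0,1]$: replacing $\Delta$ by $\eta_t\Delta$ yields the analogous bound $-\eta_t(L_t-\eta_tC_\Phi)\|\Delta x_t\|_2^2$, which reduces to the stated one at $\eta_t=1$. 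If this continuity argument becomes cumbersome, I would instead impose the Lipschitz bounds globally.
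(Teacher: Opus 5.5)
Your proposal is correct and follows the paper's proof essentially step for step. It uses the same three smoothness estimates driven by linearized feasibility, then multiplies the reduced stationarity equation by $\alpha_t$ and applies complementarity, H\"older, and $\tau\ge\Lambda,M$. Your explicit restriction to $\eta_t=1$ and your continuity argument keeping the segment inside the region where the Lipschitz bounds hold are assumptions the paper's proof makes implicitly without comment, so they add rigor and do not change the argument.
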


\begin{proof}
We estimate the three terms in the merit function separately.

From Assumption \ref{ass:smoothness} we have
\begin{align*}
\textstyle f(x_t+\Delta x_t)
\le
f(x_t) + \nabla f(x_t)^\top \Delta x_t + \frac{\ell_f}{2}\|\Delta x_t\|_2^2,
\end{align*}
and
\begin{align*}
h_i(x_t+\Delta x_t)
=
h_i(x_t) + \nabla h_i(x_t)^\top \Delta x_t + \rho_{h,i,t},\\
\textstyle |\rho_{h,i,t}| \le \frac{\ell_{h,i}}{2}\|\Delta x_t\|_2^2.
\end{align*}
Since $\Delta x_t = U_t \alpha_t$, the linearized equality feasibility condition gives
\begin{align*}
h(x_t) + J_h(x_t)\Delta x_t
=
h(x_t) + J_h(x_t)U_t\alpha_t
=
0.
\end{align*}
Hence
$|h_i(x_t+\Delta x_t)|
\le
\frac{\ell_{h,i}}{2}\|\Delta x_t\|_2^2.$
Summing over $i$ yields
\begin{align*}
\|h(x_t\!+\!\Delta x_t)\|_1
\!-\!
\|h(x_t)\|_1
\!\le\!
\!-\!\|h(x_t)\|_1
\!+\!
\frac{1}{2}\!\sum_{i=1}^{m_e}\!\ell_{h,i}\|\Delta x_t\|_2^2.
\end{align*}

Similarly, for each inequality constraint component,
\begin{align*}
g_i(x_t+\Delta x_t)
=
g_i(x_t) + \nabla g_i(x_t)^\top \Delta x_t + \rho_{g,i,t},\\
|\rho_{g,i,t}| \le \frac{\ell_{g,i}}{2}\|\Delta x_t\|_2^2.
\end{align*}
And the linearized inequality feasibility condition gives
\begin{align*}
g(x_t) + J_g(x_t)\Delta x_t
=
g(x_t) + J_g(x_t)U_t \alpha_t
\le 0.
\end{align*}
Thus for each $i$,
\begin{align*}
 [g_i(x_t+\Delta x_t)]_+
&=
\big[g_i(x_t) + \nabla g_i(x_t)^\top \Delta x_t + \rho_{g,i,t}\big]_+ \\
&\le
[\rho_{g,i,t}]_+ 
\le
|\rho_{g,i,t}| 
\le
\frac{\ell_{g,i}}{2}\|\Delta x_t\|_2^2.
\end{align*}
Therefore
\begin{align*}
&\quad \|[g(x_t+\Delta x_t)]_+\|_1
-
\|[g(x_t)]_+\|_1
\\&\textstyle \le
-\|[g(x_t)]_+\|_1
+
\frac{1}{2}\sum_{i=1}^{m_i}\ell_{g,i}\|\Delta x_t\|_2^2.
\end{align*}

Combining the three estimates gives
\begin{align*}
\textstyle\Phi(x_{t+1}) - \Phi(x_t)
\le\;&\textstyle
\nabla f(x_t)^\top \Delta x_t
+
\frac{\ell_f}{2}\|\Delta x_t\|_2^2 \\
&\textstyle\;
-\tau \|h(x_t)\|_1
+
\frac{\tau}{2}\sum_{i=1}^{m_e}\ell_{h,i}\|\Delta x_t\|_2^2 \\
&\textstyle\;
-\tau \|[g(x_t)]_+\|_1
+
\frac{\tau}{2}\sum_{i=1}^{m_i}\ell_{g,i}\|\Delta x_t\|_2^2\\
= \nabla f(x_t)^\top \Delta x_t
-\tau  &\|h(x_t)\|_1
-\tau \|[g(x_t)]_+\|_1
+
C_\Phi \|\Delta x_t\|_2^2.
\end{align*}
We would also like to note that the above argument provide a proof that there always exists an nonnegative stepsize for Armijo linesearch (Algorithm \ref{alg:armijo}).

We now use the reduced KKT system to expand the first-order term. Since 
$\nabla f(x_t)^\top \Delta x_t
=
\nabla f(x_t)^\top U_t \alpha_t.$
Multiplying the reduced stationarity condition (first equation in \eqref{eq:reduced-KKT}) by $\alpha_t$ gives
\begin{align*}
\nabla f(x_t)^{\!\top} \!U_t \alpha_t
\!=\!
- \!L_t \|\alpha_t\|_2^2
\!-\! \lambda_t^\top J_h(x_t) U_t \alpha_t
\!-\! \mu_t^\top J_g(x_t) U_t\alpha_t.
\end{align*}
Since
$J_h(x_t) U_t \alpha_t = - h(x_t),~~
\mu_t^\top J_g(x_t) U_t \alpha_t
=
- \mu_t^\top g(x_t),$
we obtain
\begin{align*}
\nabla f(x_t)^\top U_t \alpha_t
=
- L_t \|\alpha_t\|_2^2
+ \lambda_t^\top h(x_t)
+ \mu_t^\top g(x_t).
\end{align*}
Substituting this into the previous estimate yields
\begin{align*}
\Phi(x_{t+1}) - \Phi(x_t)
\le\;&
- L_t \|\alpha_t\|_2^2
+ \lambda_t^\top h(x_t)
+ \mu_t^\top g(x_t) \\
-\tau &\|h(x_t)\|_1 
\!-\!\tau \|[g(x_t)]_+\|_1
\!+
C_\Phi \|\Delta x_t\|_2^2.
\end{align*}

Next, Hölder's inequality and the multiplier bounds imply
\begin{align*}
\lambda_t^\top h(x_t)
\le
\|\lambda_t\|_\infty \|h(x_t)\|_1
\le
\Lambda \|h(x_t)\|_1,
\end{align*}
and, since $\mu_t \ge 0$,
\begin{align*}
\mu_t^\top g(x_t)
\le
\|\mu_t\|_\infty \|[g(x_t)]_+\|_1
\le
M \|[g(x_t)]_+\|_1.
\end{align*}
Hence
\begin{align*}
\Phi(x_{t+1}) - \Phi(x_t)
\le\;&
- L_t \|\alpha_t\|_2^2
- (\tau-\Lambda)\|h(x_t)\|_1\\
&- (\tau-M)\|[g(x_t)]_+\|_1 
+ C_\Phi \|\Delta x_t\|_2^2.
\end{align*}

Finally, since $U_t$ has orthonormal columns we have $\|\Delta x_t\|_2 = \|\alpha_t\|_2.$
Therefore
\begin{align*}
 \Phi(x_{t+1}) - \Phi(x_t)
&\le
-\big(L_t - C_\Phi\big)\|\Delta x_t\|_2^2.
\end{align*}
which is exactly the claimed estimate.
\end{proof}

\begin{theorem}[One-step residual bounds]
\label{thm:kkt_gap_bound}
Under Assumptions~\ref{ass:smoothness},\ref{ass:bounded_grad}, let
$r_t := \nabla f(x_t) + J_h(x_t)^\top \lambda_t + J_g(x_t)^\top \mu_t$
Then
\begin{align}
\|U_t^\top r_t\|_2 &\le L_t \|\Delta x_t\|_2, \label{eq:Ut_rt_bound_new} \\
\|h(x_t)\|_\infty &\le H_h \|\Delta x_t\|_2, \label{eq:h_bound_dx_new} \\
\|[g(x_t)]_+\|_\infty &\le H_g \|\Delta x_t\|_2, \label{eq:gplus_bound_dx_new} \\
\|\mu_t \odot g(x_t)\|_\infty &\le M H_g \|\Delta x_t\|_2.\label{eq:comp_bound_dx_new}
\end{align}
\end{theorem}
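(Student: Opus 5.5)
Each of the four bounds comes directly from one line of the reduced KKT system \eqref{eq:reduced-KKT}, with $\Delta x_t = U_t\alpha_t$ and $\|\Delta x_t\|_2=\|\alpha_t\|_2$ (orthonormal columns of $U_t$). Assumption~\ref{ass:bounded_grad} converts the constraint linearizations into norms of $\Delta x_t$. We take the multipliers $(\lambda_t,\mu_t)$ to be those returned by an accepted subspace, so $\mu_t\ge 0$ and $\|\mu_t\|_\infty\le M$.

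For \eqref{eq:Ut_rt_bound_new}, note that $U_t^\top r_t = U_t^\top\nabla f(x_t)+U_t^\top J_h(x_t)^\top\lambda_t+U_t^\top J_g(x_t)^\top\mu_t$. The first line of \eqref{eq:reduced-KKT} shows this equals $-L_t\alpha_t$. Taking norms gives $\|U_t^\top r_t\|_2=L_t\|\alpha_t\|_2=L_t\|\Delta x_t\|_2$.

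For \eqref{eq:h_bound_dx_new}, the second KKT line gives $h(x_t)=-J_h(x_t)\Delta x_t$. Componentwise, Cauchy--Schwarz gives $|h_i(x_t)|=|\nabla h_i(x_t)^\top\Delta x_t|\le H_h\|\Delta x_t\|_2$, since $x_t\in\mathcal L$. That $x_t\in\mathcal L$ follows from Theorem~\ref{thm:merit_decrease}: the merit function is nonincreasing provided $L_t\ge C_\Phi$ and $\tau\ge\Lambda,M$, and we adopt these standing conditions. For \eqref{eq:gplus_bound_dx_new}, the third line gives $g_i(x_t)\le-\nabla g_i(x_t)^\top\Delta x_t\le H_g\|\Delta x_t\|_2$. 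Since the right-hand side is nonnegative, $[g_i(x_t)]_+\le H_g\|\Delta x_t\|_2$.

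For \eqref{eq:comp_bound_dx_new}, complementarity in the last line gives $\mu_{t,i}g_i(x_t)=-\mu_{t,i}\nabla g_i(x_t)^\top\Delta x_t$. Hence $|\mu_{t,i}g_i(x_t)|\le\|\mu_t\|_\infty H_g\|\Delta x_t\|_2\le MH_g\|\Delta x_t\|_2$. The only delicate point in the whole argument is justifying $x_t\in\mathcal L$, so that the gradient bounds apply. I would handle it by induction on $t$ using Theorem~\ref{thm:merit_decrease}; with a line search one uses the Armijo acceptance instead. Either way, $\Phi(x_t)\le\Phi(x_0)$ holds for all $t$, so every iterate stays in $\mathcal L$. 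Everything else is a one-line consequence of the KKT system.
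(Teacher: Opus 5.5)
Your proof is correct and matches the paper's argument line by line. Each of the four bounds is read off from the corresponding line of the reduced KKT system \eqref{eq:reduced-KKT}, using $\|\Delta x_t\|_2=\|\alpha_t\|_2$ and the gradient bounds of Assumption~\ref{ass:bounded_grad}. Your extra induction showing $x_t\in\mathcal L$ (via Theorem~\ref{thm:merit_decrease} under $L_t\ge C_\Phi$ and $\tau\ge\Lambda,M$) fills in a step the paper leaves implicit when it applies $H_h$ and $H_g$ at $x_t$.
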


\begin{proof}
The projected stationarity bound follows directly from the reduced stationarity equation:
\begin{align*}
U_t^\top r_t + L_t \alpha_t = 0
\Longrightarrow
\|U_t^\top r_t\|_2
=
L_t \|\alpha_t\|_2
=
L_t \|\Delta x_t\|_2.
\end{align*}
Next, from the reduced equality feasibility condition,
\begin{align*}
&h(x_t) + J_h(x_t)\Delta x_t = 0\Longrightarrow\\
&|h_i(x_t)|
\!=\!
|\!\nabla h_i(x_t)^{\!\top} \!\Delta x_t|
\!\le\!
\|\!\nabla h_i(x_t)\|_2 \|\!\Delta x_t\!\|_2 \!\le\! H_h\! \|\!\Delta x_t\!\|_2\\
&\Longrightarrow~~ \|h(x_t)\|_\infty \le H_h \|\Delta x_t\|_2.
\end{align*}
Similarly, 
\vspace{-10pt}
\begin{align*}
&\quad g(x_t) + J_g(x_t)\Delta x_t \le 0 \Longrightarrow\\
&[g_i(x_t)]_+
\!\!\le\!
|\!\nabla\! g_i(x_t)^{\!\top}\! \Delta x_t|
\!\le\!
\|\!\nabla\! g_i(x_t)\|_2 \|\!\Delta x_t\!\|_2\!\le\! H_g \!\|\!\Delta x_t\!\|_2\\
&\Longrightarrow~~
\|[g(x_t)]_+\|_\infty \le H_g \|\Delta x_t\|_2.
\end{align*}

Finally, by complementarity of the reduced subproblem,
\begin{align*}
(\mu_t)_i g_i(x_t)
=
- &(\mu_t)_i \nabla g_i(x_t)^\top \Delta x_t,
\qquad i=1,\dots,m_i,\\
\Longrightarrow
\|\mu_t \!\odot\! g(x_t)\|_\infty
&\textstyle\le\!
\|\mu_t\|_\infty
\max_{1 \le i \le m_i}
\|\nabla g_i(x_t)\|_2
\|\Delta x_t\|_2 \\
&\le
M H_g \|\Delta x_t\|_2.
\end{align*}
This proves the result.
\end{proof}

\vspace{-5pt}

\subsection{Convergence Rate}

\begin{theorem}[High-probability $O(T^{-1/2})$ convergence]
\label{thm:hp_kkt_rate}
Suppose the $\Lambda, M$ in Algorithm \ref{alg:rssqp} satisfies $\Lambda\ge\Lambda^\star, M \ge M^\star$ as in Theorem \ref{thm:pacc_full_levelset}, and $\tau \ge \Lambda, M$ for the merit function \eqref{eq:merit_function}, and suppose Assumption \ref{ass:smoothness}--\ref{ass:orig_regularity} hold. Let
\begin{align*}
\textstyle c := \inf_t L_t - C_\Phi > 0,
\qquad
\Phi_* := \inf_{x \in \mathcal L} \Phi(x),
\end{align*}
where $C_\Phi$ is defined in \eqref{eq:def-CPhi}. Then for any $\rho \!\in\! (0,1)$, if the subspace dimension $d
\ge
\frac{32}{\rho^2}
\left[
m\log\!\left(\frac{12n}{\rho m}\right)
+
\log\!\left(\frac{T}{\delta\,p_{\mathrm{acc}}}\right)
\right]$, where $m=m_i+m_e+1$ then with probability at least
$1 - \delta,$ 
\begin{align*}
\textstyle \frac{1}{T}\sum_{t=1}^T \KKTgap(x_{t};\lambda_{t},\mu_{t})
\le
C_\rho
\sqrt{\frac{\Phi(x_0)-\Phi_*}{c\,T}},
\end{align*}
where
$\textstyle C_\rho
=
\max\left\{
\sup_t L_t \sqrt{\frac{n}{(1-\rho)d}},
\,
H_h,
\,
H_g,
\,
M H_g
\right\}.$
\end{theorem}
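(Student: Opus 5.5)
Our plan combines three ingredients: telescoping the merit decrease of Theorem~\ref{thm:merit_decrease}, the one-step residual bounds of Theorem~\ref{thm:kkt_gap_bound}, and a high-probability lower bound on $\|U_t^\top r_t\|_2$ in terms of $\|r_t\|_2$ coming from random projection.

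\textbf{Step 1 (telescoping).} Take $\eta_t=1$, the setting of Theorem~\ref{thm:merit_decrease}. Summing its estimate over $t=0,\dots,T-1$ gives
\[
c\sum_{t}\|\Delta x_t\|_2^2\le \Phi(x_0)-\Phi_*,
\]
since $\Phi$ is monotone, every iterate stays in $\mathcal L$, and so $\Phi(x_T)\ge\Phi_*$. Cauchy--Schwarz then yields
\[
\tfrac1T\sum_t\|\Delta x_t\|_2\le\sqrt{(\Phi(x_0)-\Phi_*)/(cT)}.
\]
We also need each outer iteration to terminate and the multipliers to be bounded. Both follow from Theorem~\ref{thm:pacc_full_levelset}, because $x_t\in\mathcal L$ and $\Lambda\ge\Lambda^\star$, $M\ge M^\star$.

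\textbf{Step 2 (reduction to a projection bound).} By Theorem~\ref{thm:kkt_gap_bound}, the last three components of $\KKTgap$ are bounded by $\max\{H_h,H_g,MH_g\}\|\Delta x_t\|_2$. For the stationarity component we only control $\|U_t^\top r_t\|_2\le L_t\|\Delta x_t\|_2$. It therefore suffices to show that, with probability at least $1-\delta$, simultaneously for all $t$,
\[
\|U_t^\top r_t\|_2^2\ge (1-\rho)\tfrac dn\|r_t\|_2^2 .
\]
Given this, $\|r_t\|_2\le L_t\sqrt{n/((1-\rho)d)}\,\|\Delta x_t\|_2$. Each component of the gap is then at most $C_\rho\|\Delta x_t\|_2$, and averaging with Step 1 finishes the proof.

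\textbf{Step 3 (main obstacle: the projection bound).} The difficulty is that $r_t$ depends on $U_t$ through $(\lambda_t,\mu_t)$, and that $U_t$ is conditioned on acceptance. We handle the dependence uniformly. The vector $r_t$ always lies in the subspace
\[
V_t=\operatorname{span}\{\nabla f(x_t),\nabla h_i(x_t),\nabla g_j(x_t)\},
\]
which has dimension at most $m=m_e+m_i+1$ and is determined by $x_t$, so it is $\mathscr F_t$-measurable and independent of the fresh sample. We then use a Johnson--Lindenstrauss subspace embedding: for Haar-uniform $U$, with probability at least $1-\varepsilon$,
\[
\|U^\top v\|_2^2\ge(1-\rho)\tfrac dn\|v\|_2^2\quad\text{for all } v\in V_t .
\]
This requires $d\gtrsim\rho^{-2}\bigl(m\log(12/\rho)+\log(1/\varepsilon)\bigr)$. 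To prove it, fix a $\rho/4$-net of the unit sphere of $V_t$, of cardinality at most $(12/\rho)^m$. Apply concentration of $\|U^\top v\|^2$ around $d/n$, which holds with rate $\exp(-d\rho^2/32)$, and take a union bound over the net. The extra $n/m$ factor inside the logarithm in the stated bound gives comfortable slack.

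Acceptance conditioning costs at most a factor $1/p_{\mathrm{acc}}$:
\[
\mathbb P(\text{bad}\mid\mathcal A_t)\le \mathbb P(\text{bad})/\mathbb P(\mathcal A_t)\le \varepsilon/p_{\mathrm{acc}} .
\]
Choosing $\varepsilon=\delta p_{\mathrm{acc}}/T$ and taking a union bound over the $T$ iterations gives total failure probability at most $\delta$. This matches the $\log(T/(\delta p_{\mathrm{acc}}))$ term in the assumed lower bound on $d$.

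The concentration constant in Step 3 is the delicate part. We would prove it by representing $U^\top v$ through a Gaussian matrix and bounding the resulting Beta-distributed squared norm, citing a standard JL tail bound rather than rederiving it.
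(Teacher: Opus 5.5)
Your proposal is correct and follows essentially the paper's proof: you telescope Theorem~\ref{thm:merit_decrease}, apply Theorem~\ref{thm:kkt_gap_bound}, and control $\|r_t\|_2$ by a net-based subspace embedding over the $\mathscr F_t$-measurable span of $\nabla f(x_t),J_h(x_t)^\top,J_g(x_t)^\top$, paying a factor $1/p_{\mathrm{acc}}$ for rejection sampling and union-bounding over the $T$ iterations, as in Lemmas~\ref{lem:conditional_subspace_projection} and~\ref{lem:conditional_projection}. The only real difference is the net: the paper uses a $\rho d/(4n)$-net with the trivial bound $\|V^\top U_tU_t^\top V\|_2\le 1$ and only a lower-tail estimate (this is where the $n$ inside $\log(12n/(\rho m))$ comes from), whereas your scale-$\rho/4$ net needs a two-sided bound to control $\|U_t^\top V\|_2$ and some care in splitting $\rho$ between net scale and deviation, which the slack you identified absorbs.
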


\begin{proof}
From Theorem \ref{thm:merit_decrease},
\begin{align}\label{eq:telescoping}
\textstyle \sum_{t=0}^{T-1} \|\Delta x_t\|_2^2
\le
\frac{\Phi(x_0)-\Phi_*}{c}.
\end{align}
Since $r_t$ in Theorem \ref{thm:pacc_full_levelset} satisfies $r_t \in \operatorname{span}(\nabla f(x_t), J_h(x_t), J_g(x_t))$, applying Lemma \ref{lem:conditional_subspace_projection} we get with probability at least $1-\frac{\delta}{T}$, $\|r_t\|\le\sqrt{\frac{n}{(1-\rho)d}} \|U_tr_t\|$
By Theorem \ref{thm:pacc_full_levelset}, Lemma~\ref{lem:conditional_subspace_projection} and Theorem \ref{thm:kkt_gap_bound} (Eq. \eqref{eq:Ut_rt_bound_new}),
\begin{align*}
\textstyle \|r_t\|_2
\le
 L_t \sqrt{\frac{n}{(1-\rho)d}} \|\Delta x_t\|_2
\end{align*}
holds uniformly over $t=0,\dots,T-1$ with probability at least
$1 - \frac{T}{p_{\mathrm{acc}}}\exp\!\left(-\frac{\rho^2 d}{8}\right).$
Combining with \eqref{eq:h_bound_dx_new} - \eqref{eq:comp_bound_dx_new} yields
\begin{align*}
\textstyle \KKTgap(x_t;\lambda_t,\mu_t)
\le
C_\rho \|\Delta x_t\|_2.
\end{align*}
Combining with Eq \eqref{eq:telescoping} completes the proof.
\end{proof}
We note that the definition of the KKT gap explicitly incorporates constraint violation terms. Therefore, convergence of the KKT gap directly implies convergence to feasible region. 
\begin{remark}[On the subspace dimension $d$]
Note that $d$ scales with the dimension of the constraints $m\!=\!m_i \!+\! m_e \!+\!1$ rather than the ambient dimension $n$. Consequently, the per-iteration reduced subproblem remains low-dimensional whenever $m$ is small. This makes the proposed approach particularly appealing in regimes where the set of active constraints has relatively low effective dimension compared with the ambient problem dimension.
\end{remark}

\section{Numerical Experiments}
\subsection{Nonlinear Programming}

We first evaluate the proposed method on a synthetic nonlinear programming problem designed to capture key challenges arising in constrained optimization. Specifically, we consider a high-dimensional problem of the form
\begin{align*}
\textstyle \min_{x \in \mathbb{R}^n} \; \tfrac{1}{2} x^\top Q x + p^\top x,
\end{align*}
where $n = 100, Q \in \mathbb{R}^{n \times n}$ is positive definite and \(p \in \mathbb{R}^n\). The problem is subject to a nonlinear equality constraint and nonlinear inequality constraints:
\begin{align*}
\textstyle \sum_{i=1}^n x_i \!+\! \frac{1}{10} \sum_{i=1}^n x_i^3 \!=\! 1, 
\quad
x_i^2 \le 0.5, ~ i \!=\! 1,\! \dots, \!m_{\mathrm{ineq}}.
\end{align*}
The equality constraint introduces nonconvexity through the cubic term, while the inequality constraints impose elementwise bounds on a subset of decision variables. This combination yields a nonconvex feasible region with both equality and inequality constraints.

\begin{figure}[tb]
\centering\includegraphics[width=0.99\linewidth]{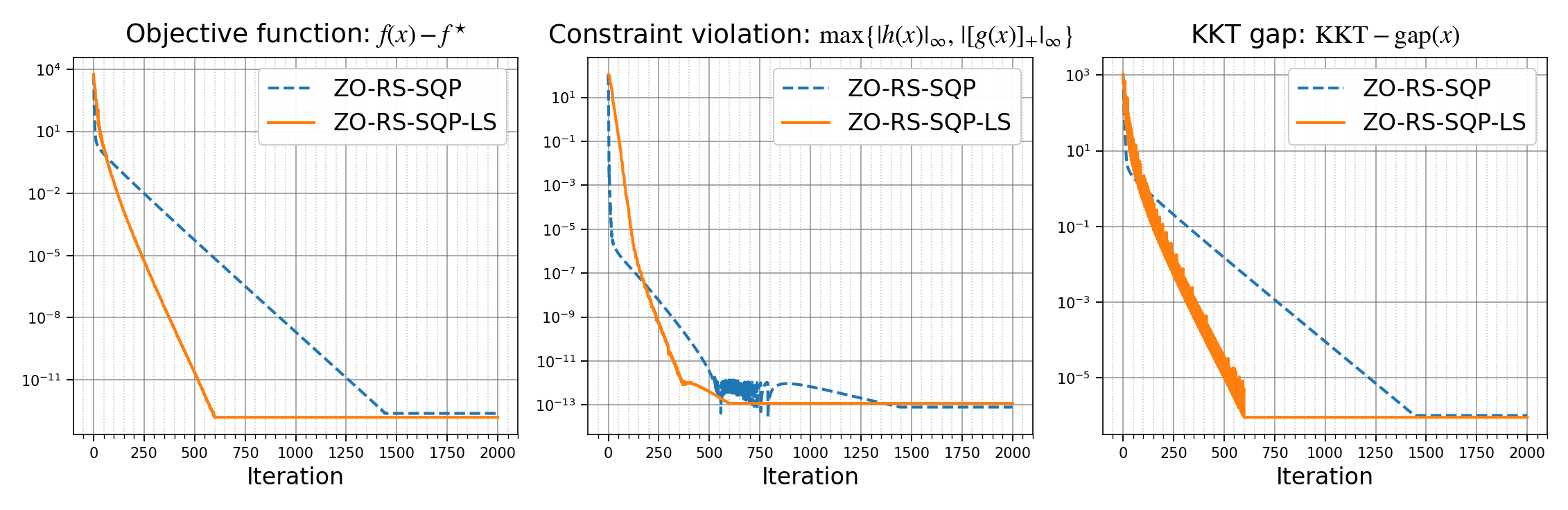}
\vspace{-5pt}
\caption{Learning curves over iterations: (left) objective value, (middle) constraint violation, and (right) KKT gap.}
    \label{fig:nonlinear-programming}
    \vspace{-5pt}
\end{figure}
Both ZO-RS-SQP and its line-search variant (ZO-RS-SQP-LS) exhibit consistent convergence in objective value, constraint violation, and KKT residual, as shown in Fig. \ref{fig:nonlinear-programming}. In particular, both methods achieve high-accuracy solutions with vanishing constraint violation, indicating that ZO-RS-SQP is able to reliably solve the nonlinear program even without line search. However, ZO-RS-SQP-LS converges slightly faster and more smoothly across iterations.

\subsection{Transient Dynamics Optimization Under Angle Separation Constraints}
We further illustrate the proposed ZO-RS-SQP method on a dynamical system inspired by transient stability in power networks. 
We consider a networked dynamical system defined over an undirected graph $\mathcal{G} = (\mathcal{V}, \mathcal{E})$ with $n = |\mathcal{V}|$ buses. The network is constructed as a ring topology augmented with second-neighbor (chordal) connections, resulting in a sparse but well-connected graph.

The set of buses is partitioned into generator nodes $\mathcal{V}_g$ and load nodes $\mathcal{V}_\ell$, with $\mathcal{V} = \mathcal{V}_g \cup \mathcal{V}_\ell$ and $\mathcal{V}_g \cap \mathcal{V}_\ell = \emptyset$. For each node $i$, we denote by $\theta_i$ the phase angle, and for generator nodes $i \in \mathcal{V}_g$ we additionally introduce the frequency deviation $\omega_i$. The coupling strengths are encoded by a symmetric matrix $B \in \mathbb{R}^{n \times n}$ that respects the graph structure, in the sense that $B_{ij} = 0$ if $(i,j) \notin \mathcal{E}$. 
The system dynamics follow a structure-preserving network model:
\begin{equation*}
\begin{aligned}
\textstyle \dot{\theta}_i &= \omega_i, \quad && i \in \mathcal{V}_g, \\
\textstyle M_g \dot{\omega}_i &\textstyle= x_i - D_g \omega_i - \sum_{j=1}^n B_{ij} \sin(\theta_i - \theta_j), \quad && i \in \mathcal{V}_g, \\
\textstyle \dot{\theta}_i &\textstyle= \frac{-d_i - \sum_{j=1}^n B_{ij} \sin(\theta_i - \theta_j)}{D_\ell}, \quad && i \in \mathcal{V}_\ell,
\end{aligned}
\end{equation*}
where $M_g$, $D_g$, and $D_\ell$ are inertia and damping parameters. The electrical power flow is given by the sinusoidal coupling term $\sum_{j} B_{ij} \sin(\theta_i - \theta_j)$. The vector $x \in \mathbb{R}^{|\mathcal{V}_g|}$ denotes the controllable generation setpoints (optimization variables), and $d_i$ are fixed demands at load buses. This model corresponds to a second-order Kuramoto-type system and is closely related to structure-preserving formulations of the classical swing equations used in transient stability analysis~\cite{dorfler2012synchronization,chiang2011direct}.%
\footnote{We emphasize that this model is a simplified abstraction and does not capture all aspects of large-scale power system dynamics. Nevertheless, it retains the essential nonlinear coupling and oscillatory behavior relevant to transient stability, and serves as a testbed for our methodology.}

To emulate a disturbance, we consider a two-stage simulation. During the fault-on period $t \in [0, t_{\mathrm{clear}}]$, the coupling matrix is modified to $B^{\mathrm{fault}}$, obtained by scaling selected edge weights to model a temporary weakening of transmission lines. After clearing, the system evolves under the nominal coupling $B$.

We consider the problem of selecting generation setpoints $x$ to minimize a quadratic cost while enforcing transient angle separation constraints:
\begin{align*}
\textstyle \min_{x \in \mathbb{R}^{|\mathcal{V}_g|}} \quad
& \textstyle \sum_{i \in \mathcal{V}_g} \left( a_i x_i^2 + b_i x_i \right) \\
\text{s.t.} \quad
& \textstyle \sum_{i \in \mathcal{V}_g} x_i = \sum_{i \in \mathcal{V}_\ell} d_i, \\
&\textstyle x_i^{\mathrm{min}} \le x_i \le x_i^{\mathrm{max}}, \quad i \in \mathcal{V}_g, \\
\textstyle \max_{t \in [0,T]}&\textstyle  \max_{\{i,j\} \in \mathcal{E}} 
|\theta_i(t;x) - \theta_j(t;x)| \le \delta_{\max}.
\end{align*}

The last constraint enforces a bound on relative phase differences over the entire trajectory and serves as a surrogate for transient stability, as excessive angle separation is associated with loss of synchronism. The trajectories $\theta(t;x)$ are obtained through nonlinear simulation, and no gradient information with respect to $x$ is assumed available, motivating the use of zeroth-order optimization methods.

\begin{figure}[tb]
\centering\includegraphics[width=0.99\linewidth]{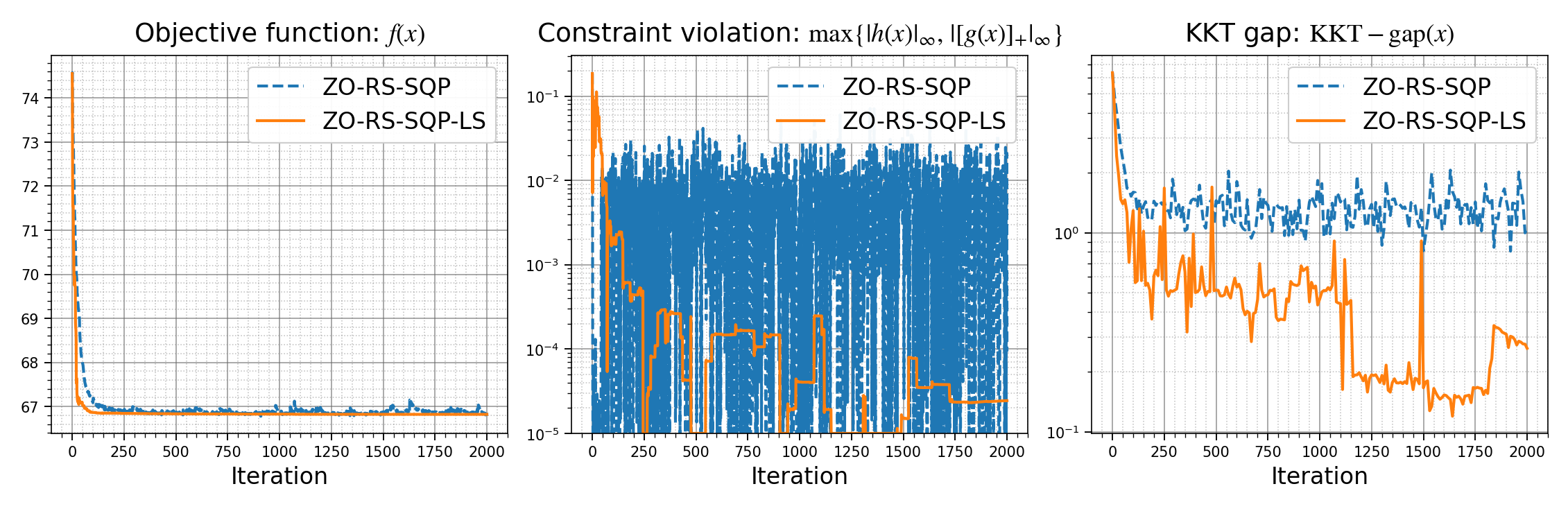}
\vspace{-5pt}
\caption{Learning curves over iterations: (left) objective value, (middle) constraint violation, and (right) KKT gap.}
    \label{fig:convergence}
    \vspace{-5pt}
\end{figure}
As shown in Fig.~\ref{fig:convergence}, both ZO-RS-SQP and ZO-RS-SQP-LS methods achieve similar objective values. While ZO-RS-SQP exhibits larger constraint violations and KKT gaps compared to ZO-RS-SQP-LS, it nevertheless attains reasonable feasibility and solution quality. In contrast, ZO-RS-SQP-LS consistently improves constraint satisfaction and reduces the KKT gap, indicating more stable convergence. These results suggest that line search effectively stabilizes zeroth-order updates by better balancing descent and feasibility. Figure~\ref{fig:theta-dynamics} further illustrates the resulting transient dynamics. The angle separation constraint is active, as the baseline solution (solving a QP ignoring the angular constraint) exhibits larger angle excursions, whereas the ZO-RS-S solution keeps the trajectories within a tighter range.

\begin{figure}[htbp]
    \centering
    \includegraphics[width=0.9\linewidth]{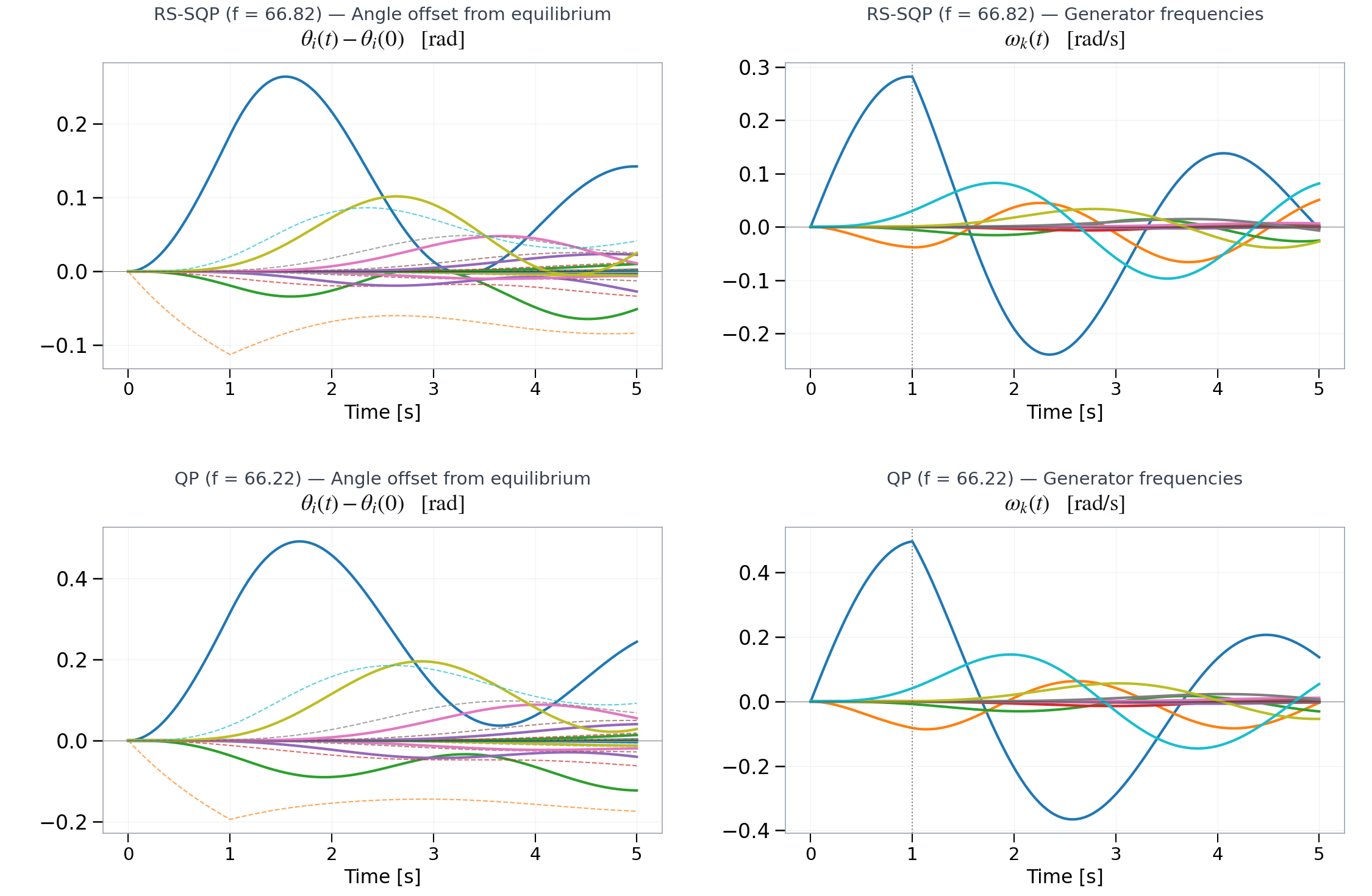}
    \vspace{-5pt}
    \caption{Transient phase (offset) and frequency dynamics under a fault disturbance. Top: solution obtained ZO-RS-S-LS. Bottom: baseline QP solution ignoring angle separation constraints. }
    \label{fig:theta-dynamics}
    \vspace{-5pt}
\end{figure}
\section{Conclusion and Future Work}

We proposed ZO-RS-SQP, a zeroth-order random-subspace SQP method for nonlinear constrained optimization. By combining subspace dimension reduction with SQP updates, the method reduces per-iteration cost while retaining explicit constraint handling. We proved convergence to first-order KKT points under standard assumptions, together with high-probability guarantees. Numerical results show that ZO-RS-SQP performs competitively and maintains strong constraint satisfaction, with line search improving stability. Future work includes incorporating approximation error into the analysis, developing adaptive subspace strategies, and extending the approach to large-scale learning and control applications.
\vspace{-10pt}
\section*{Acknowledgement}
The authors would like to thank Prof. Na Li , Prof. Jeff Shamma, and Dr. Yiqi Tian for the insightful discussions.
\bibliographystyle{ieeetr}
\bibliography{bib, references}

\vspace{-10pt}
\appendix

\begin{lemma}
\label{lem:robust_reduced_regularity}
Suppose Assumptions~\ref{ass:smoothness}--\ref{ass:orig_regularity} hold. Let
\begin{align*}
\mathcal S(x)
:=
\operatorname{span}\!\Big(
d_*(x),\;
\operatorname{range}(J_h(x)^\top)
\Big).
\end{align*}
Then there exists an angle $\theta_* \in (0,\pi/2)$, depending only on $\gamma$, $R_*$, $\sigma_*$, $H_h$, and $H_g$, such that the following holds: for every $x \in \mathcal L$ and every orthonormal matrix $U \in \mathrm{St}(n,d)$ with $d \ge m_e+1$, if the largest principal angle between $\operatorname{span}(U)$ and $\mathcal S(x)$ is at most $\theta_*$, then there exists $\Lambda^\star, M^\star$ such that the corresponding KKT multiplier pair $(\lambda,\mu)$ satisfies
    \begin{align*}
    \|\lambda\|_\infty \le \Lambda_*,
    \qquad
    \|\mu\|_\infty \le M_*.
    \end{align*}
\end{lemma}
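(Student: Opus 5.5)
Throughout, $d_*(x)$ denotes the MFCQ direction from Assumption~\ref{ass:orig_regularity}, so $\|d_*(x)\|_2\le R_*$, $J_h(x)d_*(x)=0$ and $J_g(x)d_*(x)\le-\gamma\mathbf 1$. The plan is to show that if $\operatorname{span}(U)$ is close to $\mathcal S(x)$, then the reduced pair $(J_h(x)U, J_g(x)U)$ inherits a quantitative MFCQ with constants degraded by at most a factor of two. We then invoke the classical fact that MFCQ with explicit constants bounds the multipliers of the reduced QP. Feasibility of the reduced QP will come from the same construction.

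Let $P_U=UU^\top$ and let $\theta$ be the largest principal angle between $\operatorname{span}(U)$ and $\mathcal S(x)$. For every $v\in\mathcal S(x)$ we have $\|v-P_Uv\|_2\le \sin\theta\,\|v\|_2$.

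\textbf{Step 1: reduced rank condition.} For $w\in\mathbb R^{m_e}$, the vector $J_h(x)^\top w$ lies in $\mathcal S(x)$, so
\[
\|U^\top J_h(x)^\top w\|_2=\|P_UJ_h(x)^\top w\|_2\ge \cos\theta\,\|J_h(x)^\top w\|_2\ge\cos\theta\,\sigma_*\|w\|_2 .
\]
Hence $\sigma_{\min}(J_h(x)U)\ge \sigma_*\cos\theta$. Here $d\ge m_e+1$ leaves room for this range together with $d_*$.

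\textbf{Step 2: reduced MFCQ direction.} Project $d_*$ to $\tilde d=P_Ud_*$. Then $\|J_h\tilde d\|\le H_h\sqrt{m_e}R_*\sin\theta$ and $J_g\tilde d\le(-\gamma+H_gR_*\sin\theta)\mathbf 1$. Correct the equality part by $e=-U(J_hU)^\dagger J_h\tilde d$, so that $\|e\|\le H_h\sqrt{m_e}R_*\sin\theta/(\sigma_*\cos\theta)$. The corrected direction $\hat d=\tilde d+e$ lies in $\operatorname{span}(U)$, satisfies $J_h\hat d=0$, and has
\[
J_g\hat d\le-\Big(\gamma-H_gR_*\sin\theta-\tfrac{H_gH_h\sqrt{m_e}R_*\sin\theta}{\sigma_*\cos\theta}\Big)\mathbf 1 .
\]
Choose $\theta_*$ small enough, depending only on $\gamma,R_*,\sigma_*,H_h,H_g$ (with $m_e$ absorbed), that the bracket is at least $\gamma/2$ and $\cos\theta_*\ge1/2$. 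Then $(J_hU,J_gU)$ satisfies $\mathrm{MFCQ}(\sigma_*/2,\gamma/2,2R_*)$.

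\textbf{Step 3: feasibility.} The feasible set $\{\alpha: h+J_hU\alpha=0\}$ is nonempty by surjectivity from Step 1. Take $\alpha_0=-(J_hU)^\dagger h$ and add $s\hat\alpha$ with $s$ large, where $\hat\alpha=U^\top\hat d$. This gives strict inequality feasibility, since $|g_i(x)|$ is bounded on the compact set $\mathcal L$ by continuity. The QP is then strongly convex and feasible, so it has a unique solution with KKT multipliers.

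\textbf{Step 4: multiplier bound, the main obstacle.} This step needs a bound that is uniform in $x$. First bound the solution: $\|\alpha\|\le$ the norm of the feasible point $\alpha_0+s\hat\alpha$ plus $2\|U^\top\nabla f\|/L_t$, which is bounded via $H_f$, $\sup_{\mathcal L}\|h\|,\|g\|$, $\sigma_*$ and $\gamma$. Then multiply the stationarity equation by $\hat\alpha$:
\[
\mu^\top J_gU\hat\alpha=-(U^\top\nabla f+L_t\alpha)^\top\hat\alpha ,
\]
because $J_hU\hat\alpha=0$. Since $\mu\ge0$ and $J_gU\hat\alpha\le-(\gamma/2)\mathbf 1$, the left side is at most $-(\gamma/2)\|\mu\|_1$. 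This gives $\|\mu\|_1\le 2\cdot2R_*(H_f+L_t\|\alpha\|)/\gamma=:M_*$. Next solve stationarity for $\lambda$ using $\sigma_{\min}(J_hU)\ge\sigma_*/2$:
\[
\|\lambda\|\le\frac{2}{\sigma_*}\big(H_f+L_t\|\alpha\|+\sqrt{m_i}H_gM_*\big)=:\Lambda_* .
\]
The delicate points are the dependence on $L_t$ and on bounds of $h,g$ over $\mathcal L$. I would assume $\sup_tL_t<\infty$ and use compactness of $\mathcal L$ to get these constants, noting that the dependence list in the statement implicitly absorbs them.
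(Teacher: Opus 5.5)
Your proof is correct and follows the same route as the paper's: project $d_*$ onto $\operatorname{span}(U)$, correct the equality residual inside the subspace using a lower bound on $\sigma_{\min}(J_h(x)U)$, and conclude a uniform reduced MFCQ, hence bounded multipliers. The differences are in rigor. The paper treats $d_*$ as if $h(x)+J_h(x)d_*=0$ and $g(x)+J_g(x)d_*\le-\gamma\mathbf 1$, which makes reduced feasibility immediate; it asserts $\sigma_{\min}\ge\sigma_*/2$ by a ``standard perturbation argument''; and it cites Nocedal--Wright for the multiplier bound. You instead take $d_*$ to be the actual MFCQ direction of Assumption~\ref{ass:orig_regularity}, add a separate feasibility step, prove $\sigma_{\min}(J_h(x)U)\ge\sigma_*\cos\theta$ directly, and obtain explicit bounds by testing stationarity against $\hat\alpha$. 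This also correctly shows that $\Lambda_*,M_*$ genuinely depend on $\sup_t L_t$ and on $\sup_{\mathcal L}\|h\|,\|g\|$ (already for $\min_\alpha c\alpha+\tfrac L2\alpha^2$ s.t.\ $h+a\alpha=0$ one gets $\lambda=Lh/a^2-c/a$), a dependence the paper leaves implicit.
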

\begin{proof}
Fix $x \in \mathcal L$ and abbreviate $d_* := d_*(x)$ and $\mathcal S := \mathcal S(x)$. Since $\mathcal S$ is generated by $d_*$ and the columns of $J_h(x)^\top$, we have $\dim \mathcal S \le m_e+1$.

Now suppose the largest principal angle between $\operatorname{span}(U)$ and $\mathcal S$ is at most $\theta$. Let $P_U$ denote the orthogonal projector onto $\operatorname{span}(U)$. By standard principal-angle geometry, for every $z \in \mathcal S$,
$\|P_U z - z\|_2 \le \sin(\theta)\|z\|_2.$
In particular,
\begin{align*}
\|P_U d_* - d_*\|_2 \le \sin(\theta)\|d_*\|_2 \le \sin(\theta)R_*.
\end{align*}

We first establish reduced feasibility. Define $\widetilde d := P_U d_* \in \operatorname{span}(U)$. Since $J_h(x)$ is bounded on $\mathcal L$ by Assumption~\ref{ass:bounded_grad},
\begin{align*}
\|J_h(x)(\widetilde d - d_*)\|_\infty
\le
H_h \|\widetilde d-d_*\|_2
\le
H_h R_* \sin(\theta).
\end{align*}
Because $h(x)+J_h(x)d_*=0$, it follows that
\begin{align*}
\|h(x)+J_h(x)\widetilde d\|_\infty
\le
H_h R_* \sin(\theta).
\end{align*}
Similarly,
\begin{align*}
g(x)+J_g(x)\widetilde d
&=
g(x)+J_g(x)d_* + J_g(x)(\widetilde d-d_*) \\
&\le
-\gamma \mathbf 1 + H_g R_* \sin(\theta)\mathbf 1.
\end{align*}
Hence, if
$\sin(\theta) \le \frac{\gamma}{4H_g R_*},
$
then
$g(x)+J_g(x)\widetilde d \le -\frac{\gamma}{2}\mathbf 1.$

The equality residual in $\widetilde d$ can be corrected within $\operatorname{span}(U)$. Consider the restricted linear map $A_U := J_h(x)\big|_{\operatorname{span}(U)} : \operatorname{span}(U) \to \mathbb{R}^m.$ Since $\operatorname{span}(U)$ is $\theta$-close to $\mathcal S$ and $J_h(x)$ has minimum singular value at least $\sigma_*$ on $\mathcal S$, a standard subspace perturbation argument implies that
$\sigma_{\min}(A_U) \ge \sigma_*/2$
for $\theta$ sufficiently small. Hence $A_U$ is surjective and admits a right inverse with operator norm at most $2/\sigma_*$. Therefore, there exists $\delta d \in \operatorname{span}(U)$ such that
\begin{align*}
J_h(x)(\widetilde d + \delta d) &= -h(x), \\
\|\delta d\|_2 &\le C \|h(x) + J_h(x)\widetilde d\|_2,\\
\Longrightarrow\quad 
\|\delta d\|_2 &\le C H_h R_* \sin(\theta).
\end{align*}

for some constant $C$ depending only on $\sigma_*$. Choosing $\theta$ small enough, we may ensure $\|\delta d\|_2$ small enough so that for $^{\mathrm{ref}} := \widetilde d + \delta d$ we still have
$\textstyle g(x)+J_g(x)d^{\mathrm{ref}} \le -\frac{\gamma}{4}\mathbf 1.$

The above statement shows that the reduced subproblem \eqref{eq:subspace_sqp} also satisfies MFCQ (Definition \ref{def:mfcq}) uniformly on $x\in\mathcal{L}$, and thus obtains uniformly bounded dual variable (cf. \cite{nocedal_numerical_2006}).
\end{proof}

\begin{lemma}
\label{lem:conditional_subspace_projection}
Let $V \in \mathrm{St}(n,m)$ be $\mathcal F_t$-measurable, and let $U_t$ be obtained by rejection sampling from Haar-uniform draws on $\mathrm{St}(n,d)$ until $\mathcal A_t$ occurs, with acceptance rate at least $p_{\mathrm{acc}}$. Then for any $\rho \in (0,1)$,
\begin{align*}
\textstyle \mathbb P\!\left(
\|\bar r_t\|_2
\le
\sqrt{\frac{n}{(1-\rho)d}}\,\|U_t^\top \bar r_t\|_2
\quad \forall\, \bar r_t \in \operatorname{span}(V)
\;\middle|\; \mathcal F_t
\right)
\\
\textstyle \ge
1
-
\frac{1}{p_{\mathrm{acc}}}
\left(\frac{12n}{\rho d}\right)^m
\exp\!\left(-\frac{\rho^2 d}{32}\right).
\end{align*}
Equivalently, if
$d
\ge
\frac{32}{\rho^2}
\left[
m\log\!\left(\frac{12n}{\rho m}\right)
+
\log\!\left(\frac{1}{\delta\,p_{\mathrm{acc}}}\right)
\right],$
then
\begin{align*}
\textstyle
\mathbb P\!\left(
\|\bar r_t\|_2
\!\le\!
\sqrt{\frac{n}{(1\!-\!\rho)d}}\,\|U_t^\top \bar r_t\|_2, \forall\bar r_t \!\in\! \operatorname{span}(V)
\middle| \mathcal F_t
\right)
\!\ge\!
1\!-\!\delta.
\end{align*}
\end{lemma}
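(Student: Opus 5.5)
The plan is to first prove the bound for a single Haar-uniform draw $U \in \mathrm{St}(n,d)$, independent of $\mathcal F_t$, and then pay for rejection sampling with a factor $1/p_{\mathrm{acc}}$.

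\textbf{Step 1: reduce to one fixed subspace.} Let $E$ be the bad event that some $\bar r\in\operatorname{span}(V)$ violates $\|U^\top \bar r\|_2^2 \ge (1-\rho)\frac{d}{n}\|\bar r\|_2^2$. The accepted $U_t$ has the law of $U$ conditioned on $\mathcal A_t$. Hence
\[
\mathbb P(E\mid\mathcal A_t,\mathcal F_t)\le \frac{\mathbb P(E\mid\mathcal F_t)}{\mathbb P(\mathcal A_t\mid\mathcal F_t)}\le \frac{\mathbb P(E\mid\mathcal F_t)}{p_{\mathrm{acc}}}.
\]
It therefore suffices to bound $\mathbb P(E)$ for an unconditioned Haar draw. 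Conditional on $\mathcal F_t$, $V$ is deterministic and $U$ is independent of it.

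\textbf{Step 2: pointwise concentration.} By rotation invariance, for a fixed unit vector $z$ the quantity $\|U^\top z\|_2^2$ has the law of $\|P_d w\|^2$ with $w$ uniform on the sphere $S^{n-1}$, where $P_d$ is projection onto $d$ coordinates. This is a $\mathrm{Beta}(d/2,(n-d)/2)$ variable with mean $d/n$. The standard Johnson--Lindenstrauss tail, obtained by writing $w=g/\|g\|$ for Gaussian $g$ and applying chi-square bounds, gives
\[
\mathbb P\big(\|U^\top z\|^2\le(1-\rho/2)\tfrac dn\big)\le \exp\!\big(-c\rho^2 d\big),
\]
with an explicit constant such as $1/32$ once the bound is split into a lower tail for the $d$-coordinate numerator and an upper tail for the full norm.

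\textbf{Step 3: $\epsilon$-net over the unit sphere of $\operatorname{span}(V)$.} Take an $\epsilon$-net $\mathcal N$ of size at most $(3/\epsilon)^m$ and apply a union bound over it. Extend from the net to all unit $z$ with the usual argument: the upper bound $\|U^\top\|\le 1$ holds deterministically, so
\[
\|U^\top z\|\ge \|U^\top z_0\|-\|z-z_0\|\ge \sqrt{(1-\rho/2)d/n}-\epsilon .
\]

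\textbf{Main obstacle.} The perturbation $\epsilon$ must be compared with $\sqrt{d/n}$, not with $1$. This forces $\epsilon\asymp\rho\sqrt{d/n}$, or more crudely $\epsilon=\rho d/(4n)$, which produces the stated net cardinality $(12n/(\rho d))^m$. I would check that
\[
\sqrt{1-\rho/2}-\rho\sqrt{d/n}/4\ge\sqrt{1-\rho}
\]
so that the final constant $1-\rho$ is attained, adjusting internal constants if needed.

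\textbf{Step 4: assemble and invert.} Combining Steps 1--3 gives the stated probability bound. The "equivalently" form follows by solving
\[
\frac{1}{p_{\mathrm{acc}}}\Big(\frac{12n}{\rho d}\Big)^m e^{-\rho^2 d/32}\le\delta .
\]
Since $d\ge m$, we have $\log(12n/(\rho d))\le\log(12n/(\rho m))$, so the displayed condition on $d$ is sufficient.
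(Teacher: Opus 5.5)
Your proposal is correct and follows essentially the paper's route: an $\varepsilon$-net of the unit sphere of $\operatorname{span}(V)$ with $\varepsilon=\rho d/(4n)$, the pointwise Johnson--Lindenstrauss lower tail at level $\rho/2$, a union bound, and a factor $1/p_{\mathrm{acc}}$ from conditioning on acceptance. (For the exact constant $1/32$, cite the Dasgupta--Gupta tail directly as the paper does, since a two-tail chi-square split tends to cost a factor of $2$.) The only cosmetic difference is that you pass from the net to the sphere via $\|U^\top z\|_2\ge\|U^\top z_0\|_2-\|z-z_0\|_2$ rather than the paper's bound $|x^\top A_t x-z^\top A_t z|\le 2\varepsilon$ on the quadratic form, and your check closes without adjusting constants because $\sqrt{1-\rho/2}-\sqrt{1-\rho}=\frac{\rho/2}{\sqrt{1-\rho/2}+\sqrt{1-\rho}}\ge\frac{\rho}{4}\ge\frac{\rho}{4}\sqrt{d/n}$.
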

\begin{proof}
Let
\begin{align*}
A_t := V^\top U_t U_t^\top V \in \mathbb R^{m\times m}.
\end{align*}
It suffices to show that
$\lambda_{\min}(A_t) \ge \frac{(1-\rho)d}{n}.
$

Set $\varepsilon := \frac{\rho d}{4n}.$
Let $\mathcal N_\varepsilon$ be an $\varepsilon$-net of the unit sphere $\mathbb S^{m-1}$ satisfying
\vspace{-10pt}
\begin{align*}
\textstyle|\mathcal N_\varepsilon|
\le
\left(\frac{3}{\varepsilon}\right)^m
=
\left(\frac{12n}{\rho d}\right)^m.
\end{align*}
For any $z \in \mathcal N_\varepsilon$, the vector $Vz$ is $\mathcal F_t$-measurable and satisfies $\|Vz\|_2=1$. Therefore, applying Lemma~\ref{lem:conditional_projection} with parameter $\rho/2$ gives
\begin{align*}
\mathbb P\!\left(
z^\top A_t z
=
\|U_t^\top Vz\|_2^2
\ge
\frac{(1-\rho/2)d}{n}
\;\middle|\; \mathcal F_t
\right)
\\\ge
1-\frac{1}{p_{\mathrm{acc}}}\exp\!\left(-\frac{\rho^2 d}{32}\right).
\end{align*}
A union bound over $z \in \mathcal N_\varepsilon$ yields
\begin{align*}
\mathbb P\!\left(
z^\top A_t z \ge \frac{(1-\rho/2)d}{n}
\quad \forall\, z \in \mathcal N_\varepsilon
\;\middle|\; \mathcal F_t
\right)
\\\ge
1
-
\frac{1}{p_{\mathrm{acc}}}
\left(\frac{12n}{\rho d}\right)^m
\exp\!\left(-\frac{\rho^2 d}{32}\right).
\end{align*}

It remains to pass from the net to the whole sphere. Fix any $x \in \mathbb S^{m-1}$ and choose $z \in \mathcal N_\varepsilon$ such that $\|x-z\|_2 \le \varepsilon$. Then
\begin{align*}
\bigl|x^\top A_t x - z^\top A_t z\bigr|
\le
\|x-z\|_2\,\|A_t x\|_2 + \|z\|_2\,\|A_t(x-z)\|_2
\le
2\varepsilon.
\end{align*}
Therefore, on the preceding net event,
\begin{align*}
x^\top A_t x
\ge
z^\top A_t z - 2\varepsilon
\ge
\frac{(1-\rho/2)d}{n} - \frac{\rho d}{2n}
=
\frac{(1-\rho)d}{n}.
\end{align*}
Since this holds for every $x \in \mathbb S^{m-1}$, we obtain
\begin{align*}
\textstyle \lambda_{\min}(A_t)
=
\inf_{\|x\|_2=1} x^\top A_t x
\ge
\frac{(1-\rho)d}{n},
\end{align*}
which is exactly the claim.
\end{proof}
\begin{lemma}[Random projection under rejection sampling]
\label{lem:conditional_projection}
Let $\bar r_t$ be $\mathcal F_t$-measurable, and let $U_t$ be obtained by rejection sampling from Haar-uniform draws on $\mathrm{St}(n,d)$ until $\mathcal A_t$ occurs, with acceptance rate at least $p_\mathrm{acc}$. Then for any $\rho \in (0,1)$,
\begin{align*}
\textstyle \mathbb P\!\left(
\|\bar r_t\|_2
\le
\sqrt{\frac{n}{(1-\rho)d}} \, \|U_t^\top \bar r_t\|_2
\;\middle|\; \mathcal F_t
\right)
\ge
1 - \frac{1}{p_{\mathrm{acc}}}\exp\!\left(-\frac{\rho^2 d}{8}\right).
\end{align*}
\end{lemma}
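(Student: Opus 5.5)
The plan is to prove the statement in two stages. First I show a concentration bound for an unconditioned Haar-uniform draw. Then I pay for the rejection step by a change-of-measure argument.

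\textbf{Reduction and unconditioned bound.} Let $\widetilde U$ be a single Haar-uniform draw on $\mathrm{St}(n,d)$, independent of $\mathcal F_t$. If $\bar r_t=0$ there is nothing to prove, so fix $\bar r_t\neq 0$. By rotation invariance, conditional on $\mathcal F_t$ the vector $\widetilde U^\top \bar r_t/\|\bar r_t\|_2$ has the same law as the first $d$ coordinates of a uniform point on $\mathbb S^{n-1}$. Equivalently, $\|\widetilde U^\top \bar r_t\|_2^2/\|\bar r_t\|_2^2$ is distributed as $X/(X+Y)$, where $X\sim\chi^2_d$ and $Y\sim\chi^2_{n-d}$ are independent, i.e. a $\mathrm{Beta}(d/2,(n-d)/2)$ variable with mean $d/n$. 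The target event $\|\bar r_t\|_2\le\sqrt{n/((1-\rho)d)}\,\|U^\top\bar r_t\|_2$ is exactly $\|U^\top \bar r_t\|_2^2/\|\bar r_t\|_2^2\ge (1-\rho)d/n$.

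For the lower tail of this Beta variable I would use the Johnson--Lindenstrauss style bound (Dasgupta--Gupta):
\[
\mathbb P\bigl(\|\widetilde U^\top z\|_2^2\le (1-\rho)\tfrac{d}{n}\bigr)\le \exp\!\bigl(\tfrac d2(\rho+\log(1-\rho))\bigr)\le \exp\!\bigl(-\tfrac{\rho^2 d}{4}\bigr)
\]
for unit $z$. This uses $\log(1-\rho)\le -\rho-\rho^2/2$, and it is stronger than the required $\exp(-\rho^2 d/8)$. A cruder route would also suffice: write the ratio as $X/(X+Y)$, apply Laurent--Massart to bound $X$ from below and $X+Y\sim\chi^2_n$ from above, and split $\rho$ between the two tails. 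That gives exponents like $\rho^2 d/16$ or $\rho^2 d/8$, which still fit the statement. The main care point is the constants here: the paper's exponent $1/8$ must be reachable. I would commit to the Dasgupta--Gupta moment-generating-function proof, which computes $\mathbb E e^{-sX+s'(X+Y)}$ in the standard way.

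\textbf{Rejection sampling.} The accepted $U_t$ has the law of $\widetilde U$ conditioned on the acceptance event $\mathcal A_t$, which is $\mathcal F_t$-measurable given $U$. For any bad event $E$,
\[
\mathbb P(U_t\in E\mid\mathcal F_t)=\frac{\mathbb P(\widetilde U\in E\cap\mathcal A_t\mid\mathcal F_t)}{\mathbb P(\widetilde U\in \mathcal A_t\mid\mathcal F_t)}\le \frac{\mathbb P(\widetilde U\in E\mid\mathcal F_t)}{p_{\mathrm{acc}}},
\]
where the denominator is bounded by Theorem~\ref{thm:pacc_full_levelset}. One could instead sum over the geometric number of trials, but the conditional-law identity is cleaner. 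Applying this with $E$ the failure event and using the unconditioned bound from the first stage gives the claimed $1-\frac{1}{p_{\mathrm{acc}}}\exp(-\rho^2 d/8)$.

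The only subtlety I expect is measurability: $\bar r_t$ must be $\mathcal F_t$-measurable, hence independent of the fresh draws. This holds by hypothesis, since $\mathcal F_t$ contains the history up to $x_t$ but not the new samples.
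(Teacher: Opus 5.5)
Your proof is correct and follows the paper's: a Dasgupta--Gupta (Lemma 2.2) lower-tail bound for a single Haar draw, which you sharpen to $\exp(-\rho^2 d/4)\le\exp(-\rho^2 d/8)$, followed by the rejection-sampling identity $\mathbb P(E\mid\mathcal F_t,\mathcal A_t)\le \mathbb P(E\mid\mathcal F_t)/p_{\mathrm{acc}}$. The bound $p_{\mathrm{acc}}$ on the denominator is a hypothesis of the lemma, so you do not need Theorem~\ref{thm:pacc_full_levelset}; also, the $\rho^2 d/16$ exponent you mention for the Laurent--Massart alternative would not meet the stated $\exp(-\rho^2 d/8)$, so your decision to use the Dasgupta--Gupta route is the right one.
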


\begin{proof}
Conditioned on $\mathcal F_t$, each proposal $\widetilde U_t$ is Haar-uniform. The accepted $U_t$ has the law of $\widetilde U_t$ conditioned on $\mathcal A_t$. Thus for any event $\mathcal B_t$,
\begin{align*}
\textstyle \mathbb P(\mathcal B_t \mid \mathcal F_t, \mathcal A_t)
=
\frac{\mathbb P(\mathcal B_t \cap \mathcal A_t \mid \mathcal F_t)}
{\mathbb P(\mathcal A_t \mid \mathcal F_t)}.
\end{align*}
Let $\mathcal B_t$ be the standard projection concentration event. Then (cf. Lemma 2.2 in \cite{dasgupta2003elementary})
\begin{align*}
\textstyle \mathbb P(\mathcal B_t^c \mid \mathcal F_t)
\le
\exp\!\left(-\frac{\rho^2 d}{8}\right),
\end{align*}
and dividing by $\mathbb P(\mathcal A_t \mid \mathcal F_t) \ge p_{\mathrm{acc}}$ gives the result.
\end{proof}
\end{document}